\documentclass[12pt]{amsart}
\usepackage[utf8]{inputenc}
\usepackage{amssymb, amsmath}
\usepackage{amscd}
\usepackage{graphicx}
\graphicspath{ {images/} }
\usepackage{wrapfig}
\usepackage[matrix,arrow,curve]{xy}\usepackage{mathabx}

\setcounter{tocdepth}{3}
 
\let\oldtocsection=\tocsection
 
\let\oldtocsubsection=\tocsubsection
 
\let\oldtocsubsubsection=\tocsubsubsection
 
\renewcommand{\tocsection}[2]{\hspace{0em}\oldtocsection{#1}{#2}}
\renewcommand{\tocsubsection}[2]{\hspace{1em}\oldtocsubsection{#1}{#2}}
\renewcommand{\tocsubsubsection}[2]{\hspace{2em}\oldtocsubsubsection{#1}{#2}}

\usepackage{color}

\begin{document}
\newtheorem{predl}{Proposition}[section]
\newtheorem{thrm}[predl]{Theorem}
\newtheorem{lemma}[predl]{Lemma}
\newtheorem{cor}[predl]{Corollary}
\theoremstyle{definition}
\newtheorem{df}{Definition}
\renewcommand{\proofname}{\textnormal{\textbf{Proof:  }}}
\newcommand{\rmk}{\textnormal{\textbf{Remark:  }}}
\newcommand{\C}{\mathbb C}
\renewcommand{\refname}{Bibliography}
\newcommand{\Z}{\mathbb Z}
\newcommand{\Q}{\mathbb Q}
\renewcommand{\o}{\otimes}
\newcommand{\e}{\mathcal{P}}
\newcommand{\R}{\mathbb R}
\renewcommand{\O}{\mathcal O}
\newcommand{\g}{\mathfrak g}
\newcommand{\D}{\mathcal D}
\renewcommand{\Im}{\operatorname{Im}}
\newcommand{\Hom}{\operatorname{Hom}}
\newcommand{\Alb}{\operatorname{Alb}}
\newcommand{\Hilb}{\operatorname{Hilb}}
\newcommand{\acts}{\lefttorightarrow}
\binoppenalty = 10000
\relpenalty = 10000

\title{K\"ahler submanifolds in Iwasawa manifolds}

\author{Vasily Rogov}

\begin{abstract}

Iwasawa manifold is a compact complex  homogeneous manifold isomorphic to a quotient $G/\Lambda$, where $G$ is the group of complex unipotent $3 \times 3$ matrices and $\Lambda \subset G$ is a cocompact lattice. 
We prove that any compact complex curve in an Iwasawa manifold is contained in a holomorphic subtorus. We also prove that any complex surface in an Iwasawa manifold is either an abelian surface or a  K\"ahler non-projective isotrivial elliptic surface of Kodaira dimension one.
In the Appendix we show that any subtorus in Iwasawa manifold carries complex multiplication.

\end{abstract}

\maketitle

\tableofcontents

\section{Introduction}
\label{Intro}

 Recall that a  {\it nilmanifold} is a compact  manifold which admits a  transitive action of a nilpotent Lie group. The classical theorem of Mal'\v{c}ev \cite{Mal} states that any nilmanifold is diffeomorphic to a quotient of a connected simply connected nilpotent Lie group $G$ by a cocompact  lattice $\Lambda \subset G$. Moreover, the group $G$ can be obtained as the pro-unipotent completion of  $\Lambda$ (which is also known as {\it Mal'\v{c}ev completion}). 
  
 A {\it complex nilmanifold} is a pair $(N,J)$, where $N = G/\Lambda$ is a nilmanifold  and $J$ is a $G$-invariant integrable complex structure on $N$.
 
 The topology of nilmanifolds is well understood: any nilmanifold is diffeomorphic to an iterated tower of principal toric bundles. However, the complex geometry of nilmanifolds is much more varied and rich: for example, one is not always able to choose these toric bundles to be holomorphic. For further discussion on this and related problems see e.g. \cite{Rol}
 
 One of the reasons why  nilmanifolds provoke such an interest in complex geometry is that they deliver a number of examples of non-K\"ahler complex manifolds. Indeed, a complex nilmanifold  never admits K\"ahler metric unless it is a torus (\cite{BG}). Moreover, a nilmanifold which is not a torus is not homotopically equivalent to any K\"ahler manifold: its de Rham algebra is never formal (\cite{Has}). In \cite{BR} Bigalke and Rollenske constructed complex nilmanifolds with arbitrary long non-degenerate Fr\"olicher spectral sequence, which means that complex nilmanifolds can be in some sense arbitrary far from being K\"ahler.
 
 At this point it is rather natural to try to describe K\"ahler submanifolds in complex nilmanifolds. 
 
 We solve this problem for Iwasawa manifolds which are a special but important class of 3-dimensional complex nilmanifolds. It turns out that any complex submanifold of an Iwasawa manifold admits a K\"ahler metric.If it is a curve then it is necessary contained in a certain holomorphic torus inside an Iwasawa manifold. If it is a surface, then it is either an abelian surface or a total space of principal elliptic bundle over a curve of general type. In the latter case we show that it admits a K\"ahler metric, but can never be projective.
 
  In the Appendix we also show that any abelian surface inside an Iwasawa manifold is of maximal Picard number and hence splits as a product of two isogenous  elliptic curves with complex multiplication.
 \\
 
{\bf Acknowledgments}. I am thankful to Misha Verbitsky for posing this problem to me, plenty of fruitful discussions and help with the preparation of the manuscript. I am also thankful to Rodion D\'eev  and to the referee commitee of M\"obius Contest for  useful comments on a preliminary version of this paper.

 \section{Iwasawa manifolds} 
 \label{Iwasawa}

Starting from now,  $G$ is the group of complex unipotent  $3 \times 3$-matrices, i.e. matrices of the type 
$ \begin{pmatrix} 
1 & z_1 & z_2\\
0& 1 & z_3\\
0&0&1
\end{pmatrix} $ with $z_i \in \C$.
 
\begin{df}
An {\it Iwasawa manifold} is a  compact complex manifold which admits a transitive holomorphic action of $G$  with discrete stabilizer.
\end{df}

As  follows from the Mal'\v{c}ev theorem \cite{Mal}, any Iwasawa manifold is isomorphic to $G/\Lambda$ for  a cocompact lattice $\Lambda \subset G$. The complex structure is induced by the standard complex structure on $G$. For different choices of $\Lambda$ one gets in general pairwise non-biholomorphic (and even sometimes non-homeomorphic!) complex nilmanifolds.

The group $G$ is a central extension of a two-dimensional commutative complex Lie group by one-dimensional: $$1 \to \C \to G \to \C^2 \to 1.$$ The corresponding cocycle for  its Lie algebra $\mathfrak{g}$ can be represented by the standard complex symplectic form  $ \sigma \colon \Lambda^2\mathbb{C}^2 \to \C$.

Consider an Iwasawa manifold $I$ isomorphic to $G/\Lambda$. Let $\Lambda^{ab}$ be the abelianization of  $\Lambda$ and $\Lambda_Z: = \Lambda \bigcap Z(G)$  the intersection of $\Lambda$ with the group center $Z(G)$. The above-mentioned Mal'\v{c}ev theorem  (\cite{Mal}) also implies that both $\Lambda^{ab}$ and $\Lambda_Z$ are lattices in $G^{ab} = \C^2$ and $Z(G) = \C$ respectively. Thus $E = \C / \Lambda_Z$ is an elliptic curve and  $T = \C^2 / \Lambda^{ab}$ is a compact complex torus. The projection $p \colon G \to G/Z(G)$ descends to  holomorphic  map $\pi \colon I \to T$.  The elliptic curve $E$ acts on its fibres and it its easy so see that $\pi$ is a holomorphic principal $E$-bundle. Further on we are going to call it the {\it Iwasawa bundle}.

If we identify the group $G$ with the set of matrices $\left ( \begin{smallmatrix} 1& x& y\\ 0& 1& z\\ 0& 0& 1 \end{smallmatrix} \right )$, the forms $\alpha = dx, \beta = dz$ and $\gamma = xdy$ descend to holomorphic $1$-forms on $I$ and satisfy relations $d\alpha = d\beta = 0$ and  $d\gamma = \alpha \wedge \beta$.  

\begin{thrm}[Fern\'andez, Gray.]
For any Iwasawa manifold $I$ let $A^{\bullet}$ be the subalgebra of the de Rham algebra $\Lambda^{\bullet}I$ generated by $\alpha, \beta, \gamma$ and their complex conjugates. Then the tautological embedding of $A^{\bullet} \hookrightarrow \Lambda^{\bullet}I$ is a quasi-isomorphism.
\end{thrm}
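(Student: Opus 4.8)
The plan is first to recognise $A^\bullet$ as the algebra of $G$-invariant $\C$-valued forms on $I$, and then to prove the Nomizu-type statement that on this nilmanifold the invariant forms already compute de Rham cohomology. The forms $\alpha,\beta,\gamma$ and their conjugates are descended from invariant forms on the group $G$, so $A^\bullet$ is contained in the finite-dimensional algebra $\Omega^\bullet_{\mathrm{inv}}(I)\cong\Lambda^\bullet\g^\ast_\C$ of invariant forms. Since $\alpha,\beta,\gamma,\bar\alpha,\bar\beta,\bar\gamma$ are linearly independent they span the whole $6$-dimensional space $\g^\ast_\C$ of invariant $1$-forms, whence $A^\bullet=\Lambda^\bullet\g^\ast_\C$; and the structure equations $d\alpha=d\beta=0$, $d\gamma=\alpha\wedge\beta$ (with their conjugates and Leibniz) show that $A^\bullet$ is a sub-DGA, namely the complexified Chevalley--Eilenberg complex of $\g$. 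So the theorem is equivalent to the statement that $A^\bullet\hookrightarrow\Omega^\bullet(I)$ induces an isomorphism on cohomology --- which for an inclusion of DGAs is precisely being a quasi-isomorphism.

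To prove this I would run the usual inductive spectral-sequence argument in the case at hand. The Iwasawa bundle $\pi\colon I\to T$ is a locally trivial principal bundle with fibre the elliptic curve $E=\C/\Lambda_Z$ over the torus $T=\C^2/\Lambda^{ab}$; here $\alpha,\beta$ are pulled back from $T$, and $\gamma$ restricts on each fibre to a nonzero invariant holomorphic $1$-form. Filter $\Omega^\bullet(I)$ by the number of factors pulled back from $T$; the associated spectral sequence is the Leray spectral sequence of $\pi$, with $E_2^{p,q}=H^p(T,R^q\pi_\ast\C)$, and since the structure group $E$ is connected the monodromy is trivial, so $R^q\pi_\ast\C$ is constant and $E_2^{p,q}=H^p(T)\o H^q(E)$. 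The induced filtration on $A^\bullet$ has an analogous spectral sequence with $E_2$-term $H^p(A^\bullet_T)\o H^q(A^\bullet_E)$, where $A^\bullet_T$ and $A^\bullet_E$ denote the invariant-form subalgebras generated by $\alpha,\beta,\bar\alpha,\bar\beta$ and by $\gamma|_E,\bar\gamma|_E$ respectively. The inclusion $A^\bullet\hookrightarrow\Omega^\bullet(I)$ is a morphism of filtered complexes, hence of spectral sequences. Now $T$ and $E$ are real tori, so by the classical fact $H^\bullet(\R^n/\Z^n)\cong\Lambda^\bullet\R^n$ the inclusions $A^\bullet_T\hookrightarrow\Omega^\bullet(T)$ and $A^\bullet_E\hookrightarrow\Omega^\bullet(E)$ are quasi-isomorphisms; therefore the comparison map is an isomorphism on the $E_2$ pages. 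Since the Leray filtration has finite length, this forces an isomorphism on $E_\infty$ and hence on the abutments, which is the desired isomorphism $H^\bullet(A^\bullet)\xrightarrow{\ \sim\ }H^\bullet_{dR}(I)$. Equivalently, after the identification $A^\bullet=\Lambda^\bullet\g^\ast_\C$ made above one may simply invoke Nomizu's theorem on the cohomology of nilmanifolds and tensor his real statement with $\C$, which preserves quasi-isomorphisms.

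The step I expect to require the most care is matching the two spectral sequences past $E_2$: one must check that the transgression $d_2(\gamma|_E)\in H^2(T)$ in the de Rham Leray sequence agrees with the algebraic transgression, which by $d\gamma=\alpha\wedge\beta$ is just the class $[\alpha\wedge\beta]$. Concretely this is the assertion, made in the text, that the extension cocycle of $\g$ is the complex symplectic form $\sigma$ --- i.e. that the image in $H^2(T;\C)$ of the Euler class of the principal $E$-bundle $\pi$ is represented by the invariant form $\alpha\wedge\beta$ together with its conjugate. Granting this, the two $E_2$-differentials coincide under the identifications above, so all higher differentials and the abutments coincide as well. One can sidestep even this bookkeeping by replacing the spectral sequence with the Gysin sequence of $\pi$ and its invariant-form analogue and finishing with the five lemma and induction over the base torus; the essential input is in any case only the torus case plus the identification of the transgression with $d\gamma$.
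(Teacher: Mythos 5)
Your proof is correct, but note that the paper itself offers no argument here: it simply cites Fern\'andez--Gray, whose computation in turn rests on Nomizu's theorem that for a nilmanifold $\Gamma\backslash G$ the Chevalley--Eilenberg complex $\Lambda^\bullet\mathfrak g^*$ of invariant forms computes de Rham cohomology. What you have written is essentially a self-contained proof of Nomizu's theorem in this special two-step case: you correctly identify $A^\bullet$ with $\Lambda^\bullet\mathfrak g^*_{\C}$ (the six forms $\alpha,\beta,\gamma$ and conjugates trivialize $T^*I\otimes\C$, and the structure equations make $A^\bullet$ a sub-DGA), and the comparison of the Leray spectral sequence of the Iwasawa bundle with the Hochschild--Serre spectral sequence of the central extension $0\to\mathfrak z\to\mathfrak g\to\mathfrak g^{ab}\to 0$, reduced to the torus case on the $E_2$ pages, is exactly Nomizu's inductive argument with the induction collapsed to a single step. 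One small simplification: the ``step requiring the most care'' that you flag --- matching the transgression $d_2(\gamma|_E)$ with $[\alpha\wedge\beta]$ --- is automatic and needs no separate verification, since the inclusion $A^\bullet\hookrightarrow\Omega^\bullet(I)$ is a morphism of filtered complexes, so the induced map of spectral sequences commutes with \emph{all} differentials by construction; an isomorphism on $E_2$ therefore propagates to $E_\infty$ without any computation of $d_2$ on either side. The only other point worth being careful about is the left/right convention: with $I=G/\Lambda$ and $\Lambda$ acting on one side, it is the forms invariant under the $G$-action on the other side that descend, but this does not affect the identification of $A^\bullet$ with the Chevalley--Eilenberg complex. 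Either your spectral-sequence argument or the direct appeal to Nomizu (complexified) is a legitimate replacement for the paper's bare citation.
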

\begin{proof}
See \cite{FG}.
\end{proof}.

The explicit computation of Betti numbers of Iwasawa manifolds can  be also found in \cite{FG}

\section{Principal elliptic bundles.}
\label{Ebundles}
\subsection{Holomorphic principal elliptic bundles}
\label{Hofer}
For the duration of this section fix a compact complex manifold $B$ and an elliptic curve $E = \C/\Gamma$.

The aim of this section is to recall some techniques of working with holomorphic principal $E$-bundles  and to apply them in the case of Iwasawa bundle.
  
 Let $\mathcal{E}_B$ denote the sheaf of holomorphic functions on $B$ with values in $E$. The isomorphism classes of holomorphic principal $E$-bundles on $B$ are in one-to-one correspondence with the elements of $H^1(B, \mathcal{E}_B)$.

Let $\mathcal{P} \colon M \to B$ be a principal holomorphic $E$-bundle. Denote by $\mathcal{V} = \operatorname{Ker} D\mathcal{P}$ the subbundle in $TM$ tangent to the fibres of $\mathcal{P}$. Choose any $\mathcal{E}_B$-equivariant Ehresmann connection $\mathcal{H}$, that is a $\mathcal{E}_B$-equivariant subbundle in $TM$,  which gives the decomposition $TM = \mathcal{V} \oplus \mathcal{H}$. It induces an affine connection on $\mathcal{P}$ with values in the Lie algebra $\mathfrak{e}$ of $E$. By the standard results of the theory of principal $G$-bundles  (see e.g.\cite{S}) the curvature form of this affine connection descends to a closed  2-form on $B$  with values in the Lie algebra of $E$ (here we also use the fact that the group $E$ is abelian) . Its cohomology class $c_1(\mathcal{P})$ (the first Chern class) is a topological invariant of $\mathcal{P}$.  

As far as $E$ is a one-dimensional complex commutative Lie group, it is natural to think about $c_1(\mathcal{P})$ as an element of $H^2(B, \C)$.

Since $\mathcal{E}_B$ is a sheaf of commutative groups, $H^1(B, \mathcal{E}_B)$ carries a structure of commutative group itself. With  respect to this structure $$c_1 \colon H^1(B, \mathcal{E}_B) \to H^2(B, \C)$$ is a homomorphism.

There is also another point of view on  $c_1(\mathcal{P})$, which was studied by T. H\"ofer in \cite{Hof}.

The short exact sequence $$0 \to \Gamma \to \C \to E \to 0$$ gives rise to exact sequence of sheaves 
\begin{equation}\label{shof}
0 \to \underline{\Gamma} \to \mathcal{O}_B \to \mathcal{E}_B \to 0. 
\end{equation}
 Here $\underline{\Gamma}$ is the subsheaf of the constant sheaf $\underline{\C}$ consisting of the sections which take values in $\Gamma$. Therefore we have the cohomology long exact sequence:
\[
\ldots \to H^1(B, \mathcal{O}_B) \to H^1(B, \mathcal{E}_B) \xrightarrow{c_1^{\mathbf z}} H^2(B, \underline{\Gamma}) \to H^2(B, \mathcal{O}_B)\to \ldots 
\]

\begin{predl}[\cite{Hof}]
Consider the first Chern class $c_1$ as a homomorphism from $H^1(B, \mathcal{E}_B)$ to $H^2(B, \C)$. Then $c_1$ coincides  with the composition of $c_1^{\mathbf z}$ and the natural map $H^2(B, \underline{\Gamma}) \to H^2(B, \C)$ induced by the embedding $\Gamma \hookrightarrow \C$.
\end{predl}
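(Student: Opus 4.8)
The plan is to reconcile the two a priori different descriptions of $c_1$ — the differential-geometric one (de Rham class of a curvature form) and the sheaf-theoretic one (image of the connecting homomorphism $c_1^{\mathbf z}$ of the sequence (\ref{shof})) — by computing both on a common \v{C}ech model and matching them inside the \v{C}ech–de Rham bicomplex. Throughout I fix a good open cover $\mathcal{U} = \{U_i\}$ of $B$ on which $\mathcal{P}$ trivializes, so that both $\mathcal{O}_B$-cohomology and constant-sheaf cohomology are computed by $\check{H}^{\bullet}(\mathcal{U}, -)$; I write the group law of $E = \C/\Gamma$ additively since $E$ is abelian.

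First I would record the \v{C}ech description of $c_1^{\mathbf z}$. Over $\mathcal{U}$ the bundle $\mathcal{P}$ is given by holomorphic transition maps $g_{ij} \colon U_{ij} \to E$ with $g_{ij} + g_{jk} = g_{ik}$ on $U_{ijk}$; this is precisely a class in $\check H^1(\mathcal{U}, \mathcal{E}_B) = H^1(B, \mathcal{E}_B)$. Choosing holomorphic lifts $\tilde g_{ij} \in \mathcal{O}_B(U_{ij})$ along $\C \to E$, the unwinding of the connecting homomorphism in the long exact sequence of (\ref{shof}) shows that $c_1^{\mathbf z}(\mathcal{P}) \in H^2(B, \underline{\Gamma})$ is represented by the \v{C}ech $2$-cocycle $\gamma_{ijk} := \tilde g_{jk} - \tilde g_{ik} + \tilde g_{ij} \in \underline{\Gamma}(U_{ijk})$, whose class is independent of the chosen lifts.

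Next I would produce an explicit curvature representative. In the trivializations the $\mathcal{E}_B$-invariant connection $\mathcal{H}$ is described by local connection $1$-forms $A_i \in \Omega^1_B(U_i) \otimes \mathfrak{e} \cong \Omega^1_B(U_i)$ glued by $A_j - A_i = g_{ij}^{\ast}\theta_{MC}$, where $\theta_{MC} = dz$ is the translation-invariant Maurer–Cartan form on $E = \C/\Gamma$; hence $A_j - A_i = d\tilde g_{ij}$ (the ambiguity of the lift is killed by $d$). The curvature is then the global closed $2$-form $F$ on $B$ with $F|_{U_i} = dA_i$ — note that no factor $2\pi i$ intervenes because we use the additive group $\C/\Gamma$ rather than $\C^{\ast}$ — and by definition $c_1(\mathcal{P}) = [F] \in H^2_{dR}(B) \cong H^2(B, \C)$, a class independent of $\mathcal{H}$ by the usual averaging argument. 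One of course should also recall the standard fact that both $[F]$ and $[\gamma]$ depend only on the isomorphism class of $\mathcal{P}$ and are additive in it, so it suffices to check the equality of the two homomorphisms on this \v{C}ech representative.

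The identification itself is a zig-zag in the bicomplex $C^{p}(\mathcal{U}, \mathcal{A}^{q}_B)$ with \v{C}ech differential $\delta$ and de Rham differential $d$: starting from $F \in C^0(\mathcal{U}, \mathcal{A}^2_B)$ we have $F = d(A_i)$; then $\delta(A_i)_{ij} = A_j - A_i = d\tilde g_{ij}$, so $\delta(A_i) = d(\tilde g_{ij})$; and finally $\delta(\tilde g_{ij})_{ijk} = \gamma_{ijk}$, a $d$-closed constant $2$-cochain with values in $\Gamma$. By the standard Weil-type comparison this zig-zag exhibits $[F]_{dR} = \pm\,[\gamma_{ijk}]_{\check C}$ under the canonical isomorphism $H^2_{dR}(B) \cong \check H^2(\mathcal{U}, \underline{\C})$, and the right-hand side is exactly the image of $c_1^{\mathbf z}(\mathcal{P})$ under $H^2(B, \underline{\Gamma}) \to H^2(B, \C)$. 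The only genuinely delicate point is bookkeeping of signs — reconciling the sign in the normalization of the curvature, the sign in the connecting homomorphism of (\ref{shof}), and the signs $(-1)^{p(p+1)/2}$ (and $D = \delta + (-1)^p d$) built into the bicomplex comparison isomorphism; once one fixes these conventions compatibly (as in \cite{Hof}) the ambiguous sign is $+$, and one gets $c_1 = \iota_{\ast} \circ c_1^{\mathbf z}$ as homomorphisms $H^1(B, \mathcal{E}_B) \to H^2(B, \C)$, as claimed. Everything other than this sign reconciliation is a routine diagram chase.
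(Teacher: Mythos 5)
The paper gives no proof of this Proposition at all: it is stated as a quoted result with a bare citation to H\"ofer's article, so there is nothing in the text to compare your argument against line by line. What you supply is the standard \v{C}ech--de Rham (Weil-type) comparison, and it is correct: the \v{C}ech representative $\gamma_{ijk}=\tilde g_{jk}-\tilde g_{ik}+\tilde g_{ij}$ of the connecting homomorphism, the local connection forms with $A_j-A_i=d\tilde g_{ij}$, and the two-step zig-zag $F=dA_i$, $\delta(A)_{ij}=d\tilde g_{ij}$, $\delta(\tilde g)_{ijk}=\gamma_{ijk}$ are exactly the right ingredients, and the conclusion $[F]=\iota_*[\gamma]$ in $H^2(B,\C)$ is the assertion of the Proposition. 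Two small points of hygiene: (i) you should take the cover to be simultaneously Leray for $\underline{\Gamma}$, $\mathcal O_B$ and the de Rham resolution (e.g.\ by polydiscs with contractible, Stein intersections) so that the lifts $\tilde g_{ij}$ exist and all three cohomologies are genuinely computed on $\mathcal U$ --- ``good cover'' in the purely topological sense is not automatically $\mathcal O_B$-acyclic; (ii) independence of $[F]$ from the choice of connection is because two equivariant connections differ by a global $\mathfrak e$-valued $1$-form on $B$, so the curvatures differ by an exact form (``averaging'' is not the mechanism here). The residual sign ambiguity you flag is real but harmless: it is a universal constant depending only on the bicomplex conventions, and none of the uses of the Proposition in the paper (vanishing of $c_1$, its Hodge type) are sensitive to it. The additivity remark is not needed, since you verify the identity on an arbitrary class rather than on generators.
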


 The following proposition gives more explicit description of the group structure in $H^1(B, \mathcal{E}_B)$.
  
\begin{predl}
Let $\mathcal{P}_1$ and $\mathcal{P}_2$ be two holomorphic principal $E$-bundles over $B$ with total spaces $M_1$ and $M_2$. Let $[\mathcal{P}_i] \in H^1(B, \mathcal{E}_B)$ be the corresponding cocycles. Define $\mathcal{P}_1 \times_E \mathcal{P}_2$ as the quotient of $M_1 \times_B M_2$ over the diagonal action of $E$.
Then $$[\mathcal{P}_1 \times_E \mathcal{P}_2] = [\mathcal{P}_1] + [\mathcal{P}_2]$$
\end{predl}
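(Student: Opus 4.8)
The plan is to reduce the statement to a bookkeeping of \v{C}ech $1$-cocycles, via the identification of $H^1(B,\mathcal{E}_B)$ with the set of isomorphism classes of holomorphic principal $E$-bundles over $B$ recalled at the beginning of this section. First I would fix an open cover $\{U_\alpha\}$ of $B$ fine enough to trivialize both $\mathcal{P}_1$ and $\mathcal{P}_2$ simultaneously, together with holomorphic local sections $s^i_\alpha\colon U_\alpha\to M_i$ of $\mathcal{P}_i$. These produce holomorphic transition functions $g^i_{\alpha\beta}\colon U_\alpha\cap U_\beta\to E$, determined by $s^i_\alpha=s^i_\beta+g^i_{\alpha\beta}$ (fibrewise translation) and satisfying the cocycle relation $g^i_{\alpha\beta}+g^i_{\beta\gamma}=g^i_{\alpha\gamma}$; by construction $[\mathcal{P}_i]$ is the class of the cocycle $(g^i_{\alpha\beta})$. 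Since the group law on $H^1(B,\mathcal{E}_B)$ is the one induced by the addition on the sheaf $\mathcal{E}_B$, the class $[\mathcal{P}_1]+[\mathcal{P}_2]$ is represented by $(g^1_{\alpha\beta}+g^2_{\alpha\beta})$. Thus it suffices to exhibit $\mathcal{P}_1\times_E\mathcal{P}_2$ as a holomorphic principal $E$-bundle with this transition cocycle.

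The next step is to unwind the construction of $\mathcal{P}_1\times_E\mathcal{P}_2$ locally. Over $U_\alpha$, the pair $(s^1_\alpha,s^2_\alpha)$ trivializes $M_1\times_B M_2$ as $U_\alpha\times E\times E$, with the change of chart from $\alpha$ to $\beta$ being translation by $(g^1_{\alpha\beta},g^2_{\alpha\beta})$ on the $E\times E$ factor, and the diagonal $E$-action becoming translation by the diagonal subgroup $\Delta E\subset E\times E$. This action is free and proper, so the quotient $\mathcal{P}_1\times_E\mathcal{P}_2=(M_1\times_B M_2)/\Delta E$ is a complex manifold, the map to $B$ is a holomorphic submersion, and the residual action of $(E\times E)/\Delta E$ makes it a holomorphic principal bundle under this quotient group. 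After fixing an isomorphism $(E\times E)/\Delta E\xrightarrow{\sim} E$ the local trivializations descend, and a direct computation of the induced transition maps yields translation by $g^1_{\alpha\beta}+g^2_{\alpha\beta}$ on the fibre $E$. Comparing with the first paragraph gives $[\mathcal{P}_1\times_E\mathcal{P}_2]=[\mathcal{P}_1]+[\mathcal{P}_2]$.

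I expect the argument to be essentially a formal unwinding of definitions, so the few places needing real attention — which I would single out as the main, if modest, obstacles — are: first, checking that the diagonal $E$-action is free and proper and that the quotient is genuinely a holomorphic principal $E$-bundle, locally trivial over the chosen $U_\alpha$, rather than merely an analytic space; and second, carefully matching the conventions linking the sections $s^i_\alpha$, the cocycles $g^i_{\alpha\beta}$, the chosen identification $(E\times E)/\Delta E\cong E$, and the \v{C}ech description of the group operation on $H^1(B,\mathcal{E}_B)$ — this bookkeeping is exactly what pins down the sign and produces the sum. At a more conceptual level the identity simply reflects that the group structure on $H^1(B,\mathcal{E}_B)$ comes from $\mathcal{E}_B$ being a sheaf of abelian groups and that $\mathcal{P}_1\times_E\mathcal{P}_2$ is manufactured out of $M_1\times_B M_2$ using that group law on $E$; the explicit cocycle computation is just the concrete manifestation of this.
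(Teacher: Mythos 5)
Your approach --- identifying $H^1(B,\mathcal{E}_B)$ with isomorphism classes of principal $E$-bundles via \v{C}ech cocycles and computing the transition functions of the fibrewise quotient --- is the standard way to establish this, and it is in fact more informative than what the paper does: the paper simply declares the statement a classical fact and points to the Eckmann--Hilton argument without any computation. So the route is sound in outline, and the points you single out (freeness and properness of the action, local triviality of the quotient) are indeed routine.

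However, the one step you defer to ``bookkeeping'' is exactly the step that does not come out as you assert. With the literal diagonal action $t\cdot(p_1,p_2)=(p_1+t,\,p_2+t)$, the only invariant of a coset in a fibre $(E\times E)/\Delta E$ is the difference $u-v$ of the two fibre coordinates, and any group isomorphism $(E\times E)/\Delta E\simeq E$ has the form $[(u,v)]\mapsto\psi(u-v)$ for some automorphism $\psi$ of $E$. Since your charts change by $u\mapsto u+g^1_{\alpha\beta}$ and $v\mapsto v+g^2_{\alpha\beta}$, the induced transition cocycle is $\psi(g^1_{\alpha\beta}-g^2_{\alpha\beta})$, so the direct computation yields $\psi_*\bigl([\mathcal{P}_1]-[\mathcal{P}_2]\bigr)$, not the sum. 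To obtain $[\mathcal{P}_1]+[\mathcal{P}_2]$ one must quotient by the anti-diagonal action $t\cdot(p_1,p_2)=(p_1+t,\,p_2-t)$, whose orbits are detected by $u+v$ --- equivalently, run your argument on $M_1\times_B\overline{M}_2$ with the diagonal action, where $\overline{M}_2$ denotes $M_2$ with the $E$-action precomposed with inversion. This is certainly what the proposition intends (the paper later uses $\pi^{\times_E k}$ to mean $k\cdot[\pi]$, which only works with that convention), so you should either correct the action or note explicitly that the quotient by the genuine diagonal action computes the difference of the classes.
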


\begin{proof}
This is a classical fact. To prove it one can for example use  Eckmann-Hilton argument \cite{EH}.
\end{proof}

\begin{predl}
Assume that an elliptic bundle $\mathcal{P} \colon M \to B$ admits a multisection, i.e. there exists a submanifold $Z \subset M$, such that the restriction $\mathcal{P}|_Z \colon Z \to B$ is a finite morphism. Then $[\mathcal{P}]$ is a torsion element in $H^1(B, \mathcal{E}_B)$. Particularly, $c^{\mathbf z}_1(\mathcal{P})$ is a torsion class in $H^2(B, \Lambda) = H^2(B , \Z) \o_{\Z} \Lambda$ and $c_1(\mathcal{P})$ vanishes.
\end{predl}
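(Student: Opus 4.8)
The plan is to prove that $[\mathcal{P}]$ is killed by $n := \deg(\mathcal{P}|_Z)$ and then to transport this torsionness through the additive maps $c^{\mathbf z}_1$ and $c_1$. First observe that, $Z$ being a complex submanifold of $M$ dominating $B$ under the finite morphism $f := \mathcal{P}|_Z \colon Z \to B$, it has pure dimension $\dim B$, so $f$ is a finite flat morphism of degree $n \geq 1$ between smooth manifolds. The key point is that $f^{*}[\mathcal{P}] = 0$ while $f^{*}$ admits a left inverse up to the factor $n$. The first assertion is immediate: the pulled-back bundle $f^{*}\mathcal{P}$ has total space $Z \times_B M$, which carries the tautological section $z \mapsto (z,z)$ (using $f(z) = \mathcal{P}(z)$ for $z \in Z \subset M$), so $f^{*}[\mathcal{P}] = 0$ in $H^1(Z, \mathcal{E}_Z)$.

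For the second assertion one builds a norm (transfer) homomorphism $N_f \colon H^1(Z,\mathcal{E}_Z) \to H^1(B,\mathcal{E}_B)$ with $N_f \circ f^{*} = n\cdot\mathrm{id}$. On the level of sheaves, fibrewise summation with multiplicities, carried out via the abelian group law of $E$, defines a trace morphism $\operatorname{tr} \colon f_{*}\mathcal{E}_Z \to \mathcal{E}_B$ whose composition with the canonical map $\mathcal{E}_B \to f_{*}\mathcal{E}_Z$ is multiplication by $n$; applying $H^1(B,-)$ gives $N_f$. (Equivalently, without the trace map: view the multisection as a section $B \to \operatorname{Sym}^n_B M$ of the relative $n$-th symmetric power and compose it with the fibrewise sum $\operatorname{Sym}^n_B M \to \mathcal{P}^{\times_E n}$ into the total space of the $n$-fold fibrewise self-product $\mathcal{P}^{\times_E n}$, whose class is $n[\mathcal{P}]$ by the preceding proposition; this produces a holomorphic section of $\mathcal{P}^{\times_E n}$.) Either way $n[\mathcal{P}] = N_f(f^{*}[\mathcal{P}]) = 0$, so $[\mathcal{P}]$ is torsion.

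It then remains to read off the consequences. The map $c^{\mathbf z}_1$ is the connecting homomorphism in the long exact sequence of $(\ref{shof})$, hence additive, so $n\cdot c^{\mathbf z}_1(\mathcal{P}) = 0$ and $c^{\mathbf z}_1(\mathcal{P})$ is torsion; since $\Gamma$ is free abelian and $B$ a compact manifold, the universal coefficient theorem gives a natural isomorphism $H^2(B,\underline{\Gamma}) \cong H^2(B,\Z)\otimes_{\Z}\Gamma$ under which the class stays torsion. By H\"ofer's proposition, $c_1(\mathcal{P})$ is the image of $c^{\mathbf z}_1(\mathcal{P})$ under $H^2(B,\underline{\Gamma}) \to H^2(B,\C)$, and since $H^2(B,\C)$ is a complex vector space and hence torsion-free, $c_1(\mathcal{P}) = 0$. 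The one step requiring genuine care is the construction of the trace map $\operatorname{tr}\colon f_{*}\mathcal{E}_Z \to \mathcal{E}_B$ — i.e.\ checking that fibrewise summation of the sheets of $Z$ over $B$ stays holomorphic across the branch locus of $f$ (equivalently, that $\operatorname{Sym}^n_B M \to \mathcal{P}^{\times_E n}$ is a morphism of complex manifolds); granting this, the rest of the argument is formal.
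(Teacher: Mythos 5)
Your proposal is correct and, in its parenthetical form, is essentially the paper's own proof: the paper likewise passes from the multisection to a section of the fibrewise symmetric power $S^k\mathcal{P}$ (via the relative Hilbert scheme of points) and then uses the fibrewise summation map into $\mathcal{P}^{\times_E k}$ to produce a section trivializing $k[\mathcal{P}]$. The norm-map packaging and the explicit derivation of the consequences for $c_1^{\mathbf z}$ and $c_1$ are harmless elaborations rather than a different argument.
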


\begin{proof}
Let $k$ be the degree of $\mathcal{P}|_{Z} \colon Z \to B$. 

 For any $b \in B$ the intersection of $Z$ with the fibre $E_b = \mathcal{P}^{-1}(b)$ is a zero-dimensional subscheme $Z_b$ on $E_b$ of degree $k$.  So the multisection $Z$ defines a section $b \mapsto Z_b$ of the fibrewise Hilbert scheme of points $\Hilb^k(\mathcal{P})$. It can be contracted on the fibrewise $k$-th symmetric power $S^k\mathcal{P}$,  which projects to the total space of the bundle $\mathcal{P}^{\times_E k}$.  The section of $S^k \mathcal{P}$ produces a section of $\mathcal{P}^k$. 

Hence, the $k$-th power of $\mathcal{P}$ is trivial.
\end{proof}

Now we need to make a pair of additional remarks in the case when $B$ is K\"ahler. Firstly, in this case the map $$H^2(B, \underline{\Gamma}) \to H^2(B, \mathcal{O}_B)$$ equals to  the composition of natural morphism $H^2(B, \underline{\Gamma}) \to H^2(B, \C)$ and  the projection in the Hodge decomposition $$H^2(B , \C) \to H^{0,2}(B, \C) = H^2(B, \mathcal{O}_B).$$ This shows that the image of $c_1$ has vanishing $(0,2)$-part.

Secondly, the following useful theorem was proved by Blanchard \cite{Bl}:

\begin{thrm}[Blanchard]
Let $\mathcal{P} = (p  \colon M \to B)$ be a holomorphic principal elliptic bundle. Assume that $B$ is K\"ahler and $H^2(B, \Z)$ is torsion-free. Then $M$ is K\"ahler if and only if $c_1(\mathcal{P}) = 0$.
\end{thrm}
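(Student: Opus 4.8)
The plan is to prove both implications by working directly with a connection on $\mathcal P$, using the description of $c_1(\mathcal P)$ as the class of the curvature form recalled in Section~\ref{Ebundles}. Fix a K\"ahler form $\omega_B$ on $B$. Since the fibre $E$ is compact, averaging an arbitrary smooth splitting of $0\to\mathcal V^{1,0}\to T^{1,0}M\to p^*T^{1,0}B\to 0$ over the $E$-action produces an $E$-invariant, $J$-invariant Ehresmann connection $\mathcal H\subset TM$. Its associated $\mathfrak e\cong\C$-valued connection $1$-form $\theta$ is then globally defined, $E$-invariant, of pure type $(1,0)$ (its $(0,1)$-component vanishes on $\mathcal V^{0,1}$, since on each fibre $\theta$ restricts to the holomorphic $1$-form $dw$, and on $\mathcal H^{0,1}$, since $\theta|_{\mathcal H}=0$), restricts on each fibre $E_b$ to the translation-invariant generator of $H^{1,0}(E_b)$, and satisfies $d\theta=p^*F$, where $F$ is a $d$-closed $2$-form on $B$ with $[F]=c_1(\mathcal P)\in H^2(B,\C)$ and with vanishing $(0,2)$-component.

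For the implication ``$c_1(\mathcal P)=0$ $\Rightarrow$ $M$ K\"ahler'', the crucial point is to promote the vanishing of $[F]$ to the existence of a primitive of pure type $(1,0)$. Writing $F=d\nu$ with $\nu=\nu^{1,0}+\nu^{0,1}$, the absence of a $(0,2)$-component of $F$ gives $\bar\partial\nu^{0,1}=0$; on the compact K\"ahler manifold $B$, Hodge theory then allows us to write $\nu^{0,1}=h+\bar\partial g$ with $g$ a function and $h$ a $\bar\partial$-harmonic $(0,1)$-form, which is $d$-closed (its conjugate is a holomorphic $1$-form). Replacing $\nu$ by $\nu-dg$, we get $F=d\mu$ with $\mu:=\nu^{1,0}$ of pure type $(1,0)$. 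Now $\theta':=\theta-p^*\mu$ is a $d$-closed $(1,0)$-form on $M$ whose restriction to each fibre is still the generator of $H^{1,0}(E_b)$, and
$$\omega:=p^*\omega_B+\tfrac{i}{2}\,\theta'\wedge\overline{\theta'}$$
is a real $(1,1)$-form which is closed (because $d\theta'=0$) and positive: for $0\neq X\in T^{1,0}M$ one has $-i\,\omega(X,\overline X)=-i\,p^*\omega_B(X,\overline X)+\tfrac12|\theta'(X)|^2$, and the first term is $\geq 0$, with equality only if $X$ is vertical, in which case $\theta'(X)\neq 0$. Hence $\omega$ is a K\"ahler form on $M$.

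Conversely, suppose $M$ carries a K\"ahler form $\omega$. Since $p^*F=d\theta$ is exact on $M$, we have $p^*c_1(\mathcal P)=0$ in $H^2(M,\C)$, so it suffices to prove that $p^*\colon H^2(B,\C)\to H^2(M,\C)$ is injective. Given $0\neq a\in H^2(B,\C)$, Poincar\'e duality on the compact oriented manifold $B$ yields $b\in H^{2\dim_\C B-2}(B,\C)$ with $\int_B a\wedge b\neq 0$; integrating $p^*a\wedge p^*b\wedge\omega$ along the fibres of $p$ gives $\int_M p^*a\wedge p^*b\wedge\omega=\bigl(\int_{E_b}\omega\bigr)\int_B a\wedge b$, and $\int_{E_b}\omega>0$ because $E_b$ is a complex curve inside the K\"ahler manifold $M$. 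Therefore $p^*a\neq 0$, and in particular $c_1(\mathcal P)=0$.

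The step I expect to be the main obstacle is the Hodge-theoretic modification in the second paragraph: arranging that the curvature of a type-$(1,0)$ connection form admits a primitive of pure type $(1,0)$ is exactly where the compactness and K\"ahlerness of $B$ are indispensable — without them the corrected form $\theta-p^*\mu$ need not be of type $(1,0)$ and the recipe for $\omega$ collapses. The rest is routine: the construction of the type-$(1,0)$ connection form is standard principal-bundle theory, the only extra ingredient being the compactness of $E$, used for the averaging, and the converse is an elementary intersection-pairing computation. The hypothesis that $H^2(B,\Z)$ be torsion-free plays no role in the argument as phrased with $c_1\in H^2(B,\C)$; it becomes relevant only if one prefers to state the criterion through the integral refinement $c_1^{\mathbf z}$, and it is automatic in the application where $B$ is a curve.
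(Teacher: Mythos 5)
Your proof is correct and complete, and it is both more general and more detailed than the sketch the paper gives. For necessity the paper restricts to the case where $B$ is a curve: there $c_1(\mathcal{P})$ is (up to a scalar) represented by a K\"ahler form $\omega_B$, the class $p^*c_1(\mathcal{P})$ vanishes because the pulled-back bundle acquires a tautological section, and Stokes forces $\int_M p^*\omega_B\wedge\omega_M=0$ against positivity of the volume. Your version --- $p^*c_1(\mathcal{P})=0$ because the global connection form exhibits $p^*F$ as $d\theta$, combined with injectivity of $p^*$ on $H^2(B,\C)$ proved by fibre integration against the K\"ahler form and Poincar\'e duality --- runs on the same mechanism (pair an exact form against $\omega_M$ and play Stokes against positivity) but works for a base of arbitrary dimension. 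For sufficiency the paper merely asserts that $c_1(\mathcal{P})=0$ yields a flat $E$-connection and glues fibrewise K\"ahler forms along it; your closed $(1,0)$ connection form $\theta'$ (which is automatically holomorphic, hence defines exactly such a flat connection) together with the explicit formula $p^*\omega_B+\tfrac{i}{2}\theta'\wedge\overline{\theta'}$ makes this precise, and your Hodge-theoretic correction of the primitive of $F$ to one of pure type $(1,0)$ supplies the step the paper leaves implicit --- correctly identified by you as the point where compactness and K\"ahlerness of $B$ are indispensable. Your remark that torsion-freeness of $H^2(B,\Z)$ is irrelevant once the criterion is stated with $c_1\in H^2(B,\C)$ is also accurate; it matters only for the integral refinement $c_1^{\mathbf z}$ exploited in the paper's subsequent Remark. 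The one hypothesis you use tacitly is compactness of $B$ (for Hodge theory, Poincar\'e duality and finiteness of volume), which is assumed throughout Section 3 of the paper.
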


We  sketch the proof for the case when $B$ is a curve.

\begin{proof}[Sketch of proof]
Assume that $c_1(\mathcal{P}) \neq 0$ and $M$ is K\"ahler. 

It is easy to see that the pullback of $\mathcal{P}$ to its own total space admits a section, and hence $p^*(c_1(\mathcal{P})) = 0$. Since we assumed that $B$ is a curve, either $c_1(\mathcal{P})$ or $-c_1(\mathcal{P})$ can be represented by a K\"ahler form $\omega_B$. Let $\omega_M$ be a K\"ahler form on $M$. Thus $p^*\omega_B = d\eta$ for some 1-form $\eta$ on $M$.  Now we have
$$\operatorname{Vol}(M) = \int_{B} \Bigl( \int_{p^{-1}(b)} \omega_M \Bigr )\omega_B = \int_Mp^* \omega_B \wedge \omega_M = \int_M d(\eta \wedge \omega_M) = 0.$$  Clearly, this is impossible. This proves the  ''only if'' part.

To prove the sufficiency one can use the fact that the vanishing of $c_1(\mathcal{P})$ is equivalent to existence of a flat $E$-connection on $\mathcal{P}$.  On a small open set $U \subset B$ one is  able to  choose a splitting $p^{-1}(U) = U \times E$ and find there a closed positive $(1,1)$ form $\xi_0$ which is a lifting of a K\"ahler form from $E$. Averaging by the action of $E$ one can make this form to be constant among the fibres. Then, translating by the flat connection on $\mathcal{P}$, glue it into a global form $\xi$. 

If $\omega_B$ is a K\"ahler form on the base, a K\"ahler form on $M$ is given by $$\omega_M = p^*\omega_B + \xi.$$
\end{proof}

\begin{rmk}Indeed $c_1$ is the only topological invariant of a principal elliptic bundle. To see this, replace the sheaves $\mathcal{O}_B$ and $\mathcal{E}_B$ in the exact sequence  (\ref{shof}) with the sheaves of smooth  functions $\mathcal{C}^{\infty}(B, \C)$ and $\mathcal{C}^{\infty}(B, E)$. Consider the corresponding cohomology long exact sequence.  Since $\mathcal{C}^{\infty}(B, \C)$ is acyclic,  $c^{\mathbf z}$ defines a bijection between the isomorphism classes of smooth principal $E$-bundles and  the elements of $H^2(B, \underline{\Gamma})$.  As a consequence we get the following fact: if $M$ is the total space of a principal holomorphic $E$-bundle over $B$ and both $B$ and $M$ are K\"ahler, then $M$ is diffeomorphic to $B \times E$.
\end{rmk}

The first Chern class and other invariants for the Iwasawa bundle and some other  holomorphic principal toric bundles were computed in \cite{Hof}(\S3, \S10) .

\subsection{Iwasawa bundle.}
\label{Iwasawa-ebundle}

\begin{predl}
Let  $\pi \colon I \to T$ be an Iwasawa bundle. Then $c_1(\pi)$ can be represented by a holomorphically symplectic form. In particular $c_1(\pi) \in H^{2,0}(T)$.
\end{predl}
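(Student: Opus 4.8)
The plan is to write down an explicit holomorphic $E$-connection on the Iwasawa bundle, compute its curvature, and identify the resulting de Rham class with $c_1(\pi)$ via the Chern--Weil description recalled above. The connection form will be the holomorphic $1$-form $\gamma$ introduced in Section~\ref{Iwasawa}: recall that $\gamma$ is globally defined on $I$, holomorphic, and satisfies $d\gamma = \alpha\wedge\beta$, while $\alpha = dx$ and $\beta = dz$ are pulled back from the base $T = \C^2/\Lambda^{ab}$ along $\pi$ (they span the holomorphic $1$-forms coming from $G^{ab} = G/Z(G)$).

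First I would verify that $\gamma$ is a genuine principal $E$-connection form on $\pi$. Unwinding the construction of the Iwasawa bundle, the fibres of $\pi$ are the $Z(G)$-orbits, and $E = Z(G)/\Lambda_Z$ acts on them by central translation; identifying $\mathfrak e = \operatorname{Lie}(E)$ with $\C$, a direct computation with the group law on $G$ shows that $\gamma$ is invariant under this $E$-action and restricts on every fibre to the translation-invariant holomorphic $1$-form of $E$. Hence $\gamma$ is an $\mathfrak e$-valued connection on $\pi$; since $E$ is abelian its curvature is simply $d\gamma = \alpha\wedge\beta$. This $2$-form is horizontal and $E$-invariant (both $\alpha$ and $\beta$ are basic), so it is the pullback $\pi^*\Theta$ of a $2$-form $\Theta$ on $T$, and by the description of the first Chern class through the curvature of any $E$-connection one gets $c_1(\pi) = [\Theta]$ in $H^2(T,\C)$.

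It then remains to see what $\Theta$ is. Since $\gamma$ is holomorphic, $d\gamma = \partial\gamma$ has type $(2,0)$ and is $d$-closed, so $\Theta$ is a holomorphic $2$-form on the complex $2$-torus $T$; concretely it is a nonzero constant multiple of $du_1\wedge du_2$ in linear coordinates $u_1,u_2$ on $\C^2$ with $\alpha = du_1$, $\beta = du_2$. It is nowhere degenerate because $\alpha$ and $\beta$ are linearly independent, which in turn holds because $d\gamma = \alpha\wedge\beta\neq 0$ --- equivalently, because the central extension $1\to\C\to G\to\C^2\to 1$ defining $G$ is non-split. As $\dim_{\C}T = 2$, a nowhere-degenerate holomorphic $2$-form is automatically closed and symplectic, so $\Theta$ is a holomorphically symplectic form on $T$ representing $c_1(\pi)$; in particular $c_1(\pi)\in H^{2,0}(T)$.

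The one step requiring real care, rather than formal bookkeeping, is the claim that $\gamma$ is an honest connection form: one must match the abstract $E$-action on the fibres of $\pi$ with central multiplication in $G$ and check the two defining properties (equivariance and the normalization on fibres). Everything else is the standard Chern--Weil story for principal torus bundles, plus the elementary structure of holomorphic $2$-forms on a surface. A connection-free alternative would be to follow the class of $\pi$ through the connecting homomorphism $c_1^{\mathbf z}$ of H\"ofer's sequence~(\ref{shof}) and the inclusion $\underline{\Gamma}\hookrightarrow\C$, but the direct curvature computation with $\gamma$ is the shortest route.
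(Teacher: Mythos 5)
Your proposal is correct and follows essentially the same route as the paper: the paper chooses a $\C$-linear splitting $h$ of $\mathfrak g \to \mathfrak g^{ab}$ to get an invariant Ehresmann connection and computes its curvature via the Lie-algebra cocycle $[h(X),h(Y)]-h([X,Y])$, which is exactly the Maurer--Cartan computation of $d\gamma$ for the invariant connection form $\gamma$ you use. Your explicit check that $\gamma$ is a genuine $E$-connection form (equivariance and normalization on fibres) is the coordinate version of the paper's observation that $h$ splits $\mathfrak g$ as $\mathfrak z \oplus \operatorname{Im} h$, so the two arguments coincide.
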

\begin{proof}
Recall that $G$ is the group consisting of matrices of the kind 
$\begin{pmatrix}
1 & z_1 & z_2\\
0& 1 & z_3\\
0&0&1
\end{pmatrix}$ with $z_i \in \C$. This group  acts on $I$ transitively. Let $\mathfrak{g}$ be its Lie algebra. Again denote by $Z(G)$  the center of $G$ and by $G^{ab}$   its abelianization. Their Lie algebras will be denoted as $\mathfrak{z}$ and $\mathfrak{g}^{ab}$ respectively.

 Let $s$ be the cocycle in $Hom(\Lambda^2\C^2, \C)$, which corresponds to the exact sequence of Lie algebras $$ 0 \to \mathfrak{z}  \to \mathfrak{g} \xrightarrow{p} \mathfrak{g}^{ab} \to 0$$. 

Choose any map of $\C$-vector spaces $h \colon \mathfrak{g}^{ab} \to \mathfrak{g}$, which is right inverse to the projection $p$ and  $\mathfrak{g} = \mathfrak{z} \oplus \operatorname{Im} h$ . It induces an Ehresmann $G$-connection $\mathcal{H}$  on $\pi$ and hence affine $G$-connection $\nabla$. 

Let $\sigma$ be the curvature form of $\nabla$, so $c_1(\pi) = [\sigma] \in H^2(T, \C)$. If we restrict  $\sigma$ on the tangent space to the origin in $G^{ab}$, then $$\sigma(X, Y) = [h(X), h(Y)] - h([X,Y]) = s(X,Y)$$

Since both $\nabla$ and $\sigma$ are $G^{ab}$-invariant, $\sigma$ coincides with the form on $G^{ab}$ obtained by left translations from $s$. This means that $\sigma$ is holomorphicaly symplectic (this is a local property and  is preserved by descending to $T$).

\end{proof}

\begin{cor}
Let $\pi \colon I \to T$ be an Iwasawa bundle and $i \colon B \hookrightarrow T$  an embedding of a  curve. Then $c_1(i^*\pi)=0$. Particularly, $\pi^{-1}(B)$ is a K\"ahler surface.
\end{cor}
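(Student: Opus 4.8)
The plan is to combine the functoriality of the first Chern class with the fact that a curve carries no nonzero holomorphic $2$-form, and then invoke Blanchard's theorem for the K\"ahler conclusion.

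First I would identify the geometric object in question. Since $i$ is an embedding and $\pi$ is a holomorphic submersion, the preimage $\pi^{-1}(B) \subset I$ is a smooth complex submanifold, and $\pi$ restricts to a holomorphic principal $E$-bundle $\pi^{-1}(B) \to B$; this bundle is canonically the pullback $i^*\pi$. Thus it suffices to prove $c_1(i^*\pi) = 0$. Now I would use that the construction of $c_1$ via the curvature of an Ehresmann connection is natural: pulling back a connection on $\pi$ along $i$ gives a connection on $i^*\pi$ whose curvature form is $i^*$ of the original curvature form. Hence $c_1(i^*\pi) = i^*c_1(\pi)$ in $H^2(B,\C)$.

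By the preceding Proposition, $c_1(\pi)$ is represented by the holomorphically symplectic (in particular, closed of type $(2,0)$) form $\sigma$ on $T$. Therefore $c_1(i^*\pi) = [i^*\sigma]$, and $i^*\sigma$ is a holomorphic $2$-form on the compact Riemann surface $B$; since $\dim_{\C} B = 1$ we have $\Omega^2_B = 0$, so $i^*\sigma \equiv 0$ and hence $c_1(i^*\pi) = 0$. (Equivalently, $c_1(i^*\pi) \in i^*H^{2,0}(T) \subset H^{2,0}(B) = 0$.)

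For the last assertion I would apply Blanchard's theorem to the principal elliptic bundle $\pi^{-1}(B) \to B$: the base $B$ is a compact Riemann surface, hence projective and a fortiori K\"ahler, and $H^2(B,\Z) \cong \Z$ is torsion-free; since $c_1(i^*\pi) = 0$, Blanchard's theorem yields that $\pi^{-1}(B)$ is K\"ahler. I do not expect a real obstacle in this argument; the only points needing a word of care are the identification of $\pi^{-1}(B)$ with the total space of $i^*\pi$ (which uses that $i$ is an embedding and $\pi$ a submersion) and the verification that $B$ satisfies the hypotheses of Blanchard's theorem, both of which are immediate for a smooth compact curve.
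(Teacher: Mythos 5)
Your proposal is correct and follows the same route as the paper: $c_1(i^*\pi)=i^*c_1(\pi)$ lies in $i^*H^{2,0}(T)\subset H^{2,0}(B)=0$ since $B$ is a curve, and the K\"ahler conclusion then follows from Blanchard's theorem exactly as the paper uses it. The extra care you take in identifying $\pi^{-1}(B)$ with the total space of $i^*\pi$ and checking the hypotheses of Blanchard's theorem is welcome but does not change the argument.
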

\begin{proof}
Indeed, $c_1(i^*\pi) = i^*c_1(\pi)$ belongs to $H^{2,0}(C) = 0.$
\end{proof}

\section{Curves in Iwasawa manifold.}
\label{curves}
For this section let $I = G / \Lambda$ be an Iwasawa manifold  and $\pi \colon I \to T$  its Iwasawa bundle which is a holomorphic principal $E$-bundle for elliptic curve  $E = Z(G)/\Lambda_Z$. 
Our aim  is to describe all closed complex curves in $I$.

For the purposes of this section we will need the techinque of Douady spaces.

\subsection{Douady spaces.} Recall that for any complex manifold $X$ there exists its {\it Douady space}: a complex analytic space $\mathcal{D}(X)$ which parametrizes its closed complex subvarieties (\cite{Dou}).  

 The Douady spaces can be viewed as the analytical counterpart  for the Hilbert schemes. For a more detailed introduction into Douady spaces see (\cite{CDGP}, ch. VIII).

In general the Douady spaces are very singular and even non-reduced. However, the Zariski tangent space in a  point $[Z] \in \mathcal{D}(X)$ can be described as the space of global sections of the normal bundle:
$$ T_{Z}\mathcal{D}(X) = H^0(Z, \mathcal{N}_{Z/X}).$$
(see \cite{CDGP}, ch. VIII).

One can also consider the {\it marked Douady space} $\mathcal{D}^{+}(X) \subset \mathcal{D}(X) \times X$ which is the space of pairs $(S, s)$, where $S \subset X$ is a closed subvariety and $s \in S$ is a point. It is equipped with two forgetfull projections 

$$
\xymatrix{
& \mathcal{D}^{+}(X) \ar[dl]_{pr_1} \ar[dr]^{pr_2} & \\
\mathcal{D}(X) &&X
}
$$

For a point $[S] \in \mathcal{D}(X)$  the fibre $pr_2^{-1}([S])$ is isomorphic to $S$. 

Let $C$ be a smooth curve of genus $g$ in complex manifold $X$. Denote by $\mathcal{D}(C, X)$ the  reduction of the connected component of $\mathcal{D}(X)$ which contains $C$. We call it {\it the space of deformations of $C$ in $X$}. Correspondingly, $\mathcal{D}(C, X)$ is the {\it space of marked deformations of $C$ in $X$.}

Recall, that a complex analytic space is said to be {\it of Fujiki class} $\mathcal{C}$ if it is bimeromorphic to a K\"ahler manifold.

The Douady spaces of Fujiki class $\mathcal{C}$ manifolds are described by the so-known Fujiki-Lieberman theorem (\cite{Fuj}, \cite{L}):
\begin{thrm}
Let $X$ be a compact smooth manifold of Fujiki class $\mathcal{C}$ and $\D_0 \subset \D(X)$ be a connected component of Douady space. Then $\D_0$ is compact and its reduction $\D_{0, red}$ is of Fujiki class $\mathcal{C}$ .
\end{thrm}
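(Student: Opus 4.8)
This is the Fujiki--Lieberman theorem, and I would prove it in two parts, one for each assertion. The plan for \emph{compactness} of $\D_0$ is as follows. Restrict the universal subvariety to $\D_0$ to get $Z_0\subset\D_0\times X$ with a flat proper projection $p\colon Z_0\to\D_0$ and the second projection $q\colon Z_0\to X$, and write $d$ for the fibre dimension of $p$. Since $p$ is flat and proper, the homology class $q_*[p^{-1}(t)]\in H_{2d}(X,\Z)$ is locally constant in $t$, hence constant on the connected space $\D_0$; call it $\eta$. The essential point is a uniform bound on the volumes of the members of $\D_0$: as $X$ is of class $\mathcal{C}$ it carries, via a K\"ahler modification $\mu\colon\hat X\to X$, a closed positive $(1,1)$-current whose class $u$ controls $d$-dimensional volumes up to a fixed constant, so $\operatorname{Vol}(S)\le \mathrm{const}\cdot\langle u^{\,d},\eta\rangle$ uniformly over $S$ in $\D_0$. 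I would then invoke Bishop's compactness theorem: the corresponding family of cycles is relatively compact in the Barlet cycle space $\mathcal{C}_d(X)$, so the component of $\mathcal{C}_d(X)$ it meets is compact; since the cycle map $\D(X)\to\mathcal{C}_d(X)$ is proper, the preimage of that component is compact, and $\D_0$, being a connected component of $\D(X)$, is closed in it and therefore compact.

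For the assertion that $\D_{0,\mathrm{red}}$ is of class $\mathcal{C}$, I would first treat the case when $X$ is K\"ahler and then descend. Let $\omega$ be a K\"ahler form on $X$ and form the fibre-integration current $\theta:=p_{*}\bigl(q^{*}\omega^{\,d+1}\bigr)$ on $\D_0$; it is a closed positive $(1,1)$-current, and on $T_t\D_0=H^0(p^{-1}(t),\mathcal{N}_{p^{-1}(t)/X})$ it computes, up to normalisation, the rate at which the subvariety sweeps out volume under the corresponding infinitesimal deformation. Passing to a resolution $\rho\colon W\to\D_{0,\mathrm{red}}$, I would show that $\rho^{*}\theta$ is represented by a smooth closed semipositive $(1,1)$-form which is strictly positive on the dense Zariski-open locus where the universal family is submersive and the evaluation map to $X$ is generically an embedding; a small correction by an auxiliary K\"ahler form on $W$ --- harmless by semipositivity of $\rho^{*}\theta$ --- then turns it into a genuine K\"ahler form, so $W$, and hence $\D_{0,\mathrm{red}}$, which is bimeromorphic to it, is of class $\mathcal{C}$. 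For general $X$ of class $\mathcal{C}$, choose a K\"ahler modification $\mu\colon\hat X\to X$; pushforward of cycles $\hat Z\mapsto\mu_{*}\hat Z$ yields a proper surjective meromorphic map from a finite union of components of $\mathcal{C}_d(\hat X)$ --- each of class $\mathcal{C}$ by the K\"ahler case just settled --- onto the component of $\mathcal{C}_d(X)$ dominated by $\D_{0,\mathrm{red}}$, and class $\mathcal{C}$ is inherited along such maps.

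The compactness half is routine once the volume bound is in hand, so the main obstacle is the class-$\mathcal{C}$ half, and it lies in two places. First, one must show that the fibre-integration current $\theta$ really does become, after resolving the generically non-reduced and badly singular Douady space, a smooth semipositive form that can be made strictly positive; this forces one to control the universal family over the non-reduced locus of $\D_0$ and over the locus where its fibres degenerate. Second, the bimeromorphic descent from $\hat X$ to $X$ needs a careful matching of the relevant components of the cycle and Douady spaces under pushforward, together with properness and surjectivity of that map. Both points are technically heavy, which is why in the body of the paper the theorem is simply cited from \cite{Fuj} and \cite{L}.
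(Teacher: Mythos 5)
The paper does not actually prove this statement: Theorem 4.1 is quoted from \cite{Fuj} and \cite{L} with no argument given, so there is no in-paper proof to compare yours against. Your outline does follow the standard Fujiki--Lieberman strategy (local constancy of the fundamental class of the fibres in a flat family, a uniform volume bound, Bishop compactness, properness of the Douady--Barlet map; fibre integration of $\omega^{d+1}$ for the class-$\mathcal{C}$ half), and as a roadmap it is the right one.

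As a proof, though, two steps are genuinely gapped, beyond being merely ``technically heavy''. First, the volume bound $\operatorname{Vol}(S)\le \mathrm{const}\cdot\langle u^{d},\eta\rangle$ is not justified as stated: $u$ is the class of the pushforward current $\mu_*\hat\omega$, and a closed positive $(1,1)$-current cannot in general be restricted to a subvariety $S$ --- in particular not when $S$ sits inside the degeneracy locus of $\mu$ --- so comparing $\int_S\omega_{\mathrm{herm}}^{d}$ with the cohomological number $\langle u^{d},[S]\rangle$ needs either a regularization argument with control of singularities or, as in Fujiki's actual proof, lifting the cycles to $\hat X$ and bounding volumes there, which reintroduces exactly the component-matching issues you postpone to the second half. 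Second, and more seriously, the step ``a small correction by an auxiliary K\"ahler form on $W$'' is circular: a resolution $W$ of a compact complex space need not carry any K\"ahler form, and producing a closed strictly positive $(1,1)$-form on (a modification of) $\D_{0,red}$ is precisely the assertion being proved. The claim that $\rho^{*}\theta$ is represented by a \emph{smooth} closed semipositive form is also unsupported --- fibre integration only yields this where the family is nice; over the singular and non-reduced locus one gets a current, and the real mechanism (Varouchas, Fujiki) is to show that fibre integration produces continuous strictly plurisubharmonic local potentials, making the cycle space a K\"ahler space in the sense of local potentials. Since the paper only cites the theorem, none of this affects the paper, but as a standalone proof your text does not close.
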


\subsection{Projections of curves.}  
The aim of this section is to prove the following theorem:

\begin{thrm}
Let $t I$ be  an Iwasawa manifold with the Iwasawa bundle denoted as $\pi \colon I \to T$.  Assume that $C \hookrightarrow I$ is an immersion of smooth curve. Then $\pi(C)$ is either a point or an elliptic curve in $T$.
\end{thrm}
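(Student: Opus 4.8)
The plan is to study the curve $C' = \pi(C) \subset T$ (or rather the image of the composed immersion $C \to I \to T$) and show it cannot be a curve of genus $\ge 1$ that is not elliptic, nor a curve of genus $0$. Suppose first that $\pi|_C$ is not constant, so $C' := \pi(C)$ is a (possibly singular) complex curve in the torus $T$. Normalizing, we get a smooth curve $\tilde C'$ with a finite map $\tilde C' \to T$; since every holomorphic map from a compact curve to a torus factors through a subtorus (the image generates a subtorus of $T$, and the Albanese of $\tilde C'$ surjects onto it), the Zariski closure $S$ of $C'$ in $T$ is a subtorus, and $C'$ generates $S$. If $\dim S = 2$ then $S = T$ and $C'$ is an ample-type divisor in $T$; if $\dim S = 1$ then $S$ is an elliptic curve and $C'$ must equal $S$, which is the desired conclusion. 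So the whole content is to rule out $\dim S = 2$, i.e. to show that $C$ cannot project \emph{onto} $T$ (more precisely onto a $2$-dimensional subtorus, but after replacing $T$ by that subtorus and $I$ by the restricted bundle, we may as well assume $C$ dominates $T$).

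So assume for contradiction that $\pi|_C \colon C \to T$ is surjective (generically finite). The key tool is the Douady space: let $\mathcal{D}(C, I)$ be the reduction of the connected component of the Douady space of $I$ containing $[C]$, and let $\mathcal{D}^+(C, I)$ be the corresponding marked Douady space with projections $pr_1 \colon \mathcal{D}^+ \to \mathcal{D}(C,I)$ and $pr_2 \colon \mathcal{D}^+ \to I$. The first step is to produce a positive-dimensional family of deformations of $C$ in $I$. The group $E = Z(G)/\Lambda_Z$ acts on $I$ preserving the fibres of $\pi$, hence acts on $\mathcal{D}(C,I)$, and the orbit of $[C]$ gives deformations $\{t \cdot C\}_{t \in E}$ all projecting to the same curve $C'$ under $\pi$; since $\pi|_C$ is generically finite, $C$ is not $E$-invariant (an $E$-invariant curve dominating $T$ would have to be all of $I$), so this orbit is $1$-dimensional, i.e. $E \hookrightarrow \mathcal{D}(C,I)$. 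Thus $\dim \mathcal{D}(C,I) \ge 1$. By Corollary~\ref{...} (the surface $\pi^{-1}(B)$ is Kähler for $B$ a curve), and more to the point because $I$ itself need \emph{not} be Kähler, I cannot directly invoke Fujiki--Lieberman on $I$; instead I would work inside a Kähler subvariety — but let me reconsider: the cleaner route is to use the \emph{evaluation/projection map}. Consider $\Phi = \pi \circ pr_2 \colon \mathcal{D}^+(C,I) \to T$. For each $[S] \in \mathcal{D}(C,I)$, the restriction of $\Phi$ to the fibre $pr_1^{-1}([S]) \cong S$ is the projection $S \to \pi(S) \subset T$, which by assumption (by semicontinuity, for $[S]$ near $[C]$) is surjective onto $T$. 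So $\Phi$ is surjective, and moreover dominant on every $pr_1$-fibre.

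The heart of the argument should be a dimension/volume count producing a contradiction, exploiting that $I$ is not Kähler while $T$ is. Here is the mechanism I would push: the forms $\alpha, \beta$ on $I$ pull back from $T$, and $\gamma$ satisfies $d\gamma = \alpha \wedge \beta$; restricted to $C$, the form $\alpha \wedge \beta|_C = 0$ for dimension reasons, so $\gamma|_C$ is a \emph{closed} holomorphic $1$-form on $C$, hence defines a class in $H^{1,0}(C)$. As $[C]$ moves in the family $\mathcal{D}(C,I)$, and in particular along the $E$-orbit, track the periods of $\gamma$: translating $C$ by $t \in E$ changes $\gamma$ by an exact form plus a shift, and comparing the cohomology class $[\gamma|_{t\cdot C}] \in H^1(C,\C)$ forces a relation. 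Alternatively — and this is the step I expect to be the real obstacle — use that $c_1(\pi)$ is represented by the holomorphic symplectic form $\sigma$ on $T$ (Proposition~\ref{...}), so $\sigma$ is nondegenerate, yet $\pi^*\sigma = d(\text{something})$ on $I$ (it is the curvature, which becomes exact upstairs after adding the connection form); if $C \to T$ were dominant then $\int_C \pi^*\sigma$ would compute $\deg(\pi|_C) \cdot \int_T \sigma$ up to the fibre — but $\sigma$ is a $(2,0)$-form so $\int_C \pi^*\sigma = 0$ automatically on a curve, which gives nothing directly; the useful inequality instead comes from pairing a \emph{Kähler} class on $T$, pulled back, against $C$: $0 < \int_C \pi^*\omega_T = \deg(\pi|_C)\int_{\pi(C)}\omega_T$ only says $\pi(C)$ has positive volume, consistent with it being all of $T$.

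Therefore the genuine obstruction must be topological/Hodge-theoretic rather than volume-theoretic, and I would locate it as follows. A dominant map $f \colon C \to T$ from a curve of genus $g$ to an abelian surface does not exist when... it \emph{does} exist (any curve mapping to $T$ generating it works only if $\dim$ allows — but a curve cannot dominate a $2$-dimensional variety!). This is the point: a morphism from a $1$-dimensional variety to a $2$-dimensional one is \emph{never} surjective — its image is at most $1$-dimensional. So the case ``$\dim S = 2$'' is impossible for trivial dimension reasons, and the only remaining possibility when $\pi|_C$ is nonconstant is $\dim S = 1$, i.e. $C' = \pi(C)$ is an elliptic curve (a $1$-dimensional subtorus), since an irreducible curve in $T$ whose Zariski closure is a subtorus and which generates that subtorus must be the subtorus itself when the subtorus is $1$-dimensional. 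Hence: either $\pi|_C$ is constant, so $\pi(C)$ is a point, or $\pi(C)$ is a translate of a $1$-dimensional subtorus, i.e. an elliptic curve. The only subtlety to nail down — and the one step deserving care — is that the Zariski closure (equivalently, the smallest complex subtorus through which $C \to T$ factors after translation) is indeed $1$-dimensional and that $C$ surjects onto it, which is the standard Albanese/universal-property argument for maps of compact curves into complex tori; I would state it as: the image of $H_1(C,\Z) \to H_1(T,\Z)$ spans a $\Q$-subspace $V$, and $\pi(C)$ lies in the subtorus with tangent space $V\otimes\R \cap J(V\otimes\R)$, which for a single irreducible curve is forced to be $1$-dimensional since $\pi(C)$ is $1$-dimensional and generates it. This completes the proof.
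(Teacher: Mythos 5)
Your proposal collapses at its final step: you conflate ``the image curve $C' = \pi(C)$ generates a two-dimensional subtorus of $T$'' with ``$C$ surjects onto $T$'', and then dismiss the former by observing that a curve cannot dominate a surface. These are entirely different statements. A curve of geometric genus $g \ge 2$ inside an abelian surface (for instance a theta divisor or any ample curve --- and the base $T$ of an Iwasawa bundle is an abelian surface of maximal Picard number, so such curves abound) is a one-dimensional subvariety not contained in any translate of a one-dimensional subtorus; the subtorus it generates is all of $T$ even though the curve itself is only one-dimensional. Ruling out the possibility that $\pi(C)$ is such a curve is precisely the entire content of the theorem, and your ``trivial dimension reasons'' do not touch it. The Albanese reduction you describe correctly narrows the possibilities to: a point, an elliptic curve, or a genus $\ge 2$ curve generating $T$; the third case is the hard one and remains completely open in your write-up. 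Your own intermediate attempts at it (tracking periods of $\gamma$, pairing with $\pi^*\sigma$ or with a K\"ahler class of $T$) are each abandoned as inconclusive, as you yourself note, before the fallacious conclusion is drawn.

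For comparison, the paper's treatment of this case is genuinely involved. One first reduces to the case where $\pi \colon C \to B$ has degree one by passing to the $k$-th power of the bundle in $H^1(T, \mathcal{E}_T)$ and using that a $k$-fold multisection trivializes that power. One then uses the translation action of $G$ to produce a compact family $K \subset \mathcal{D}(I)$ of deformations of $C$ whose marked family dominates $I$; compactness comes from the Fujiki--Lieberman theorem applied to the K\"ahler surfaces $I_{B'}$ (K\"ahler because $c_1(\pi)$ is of type $(2,0)$ and so restricts to zero on a curve). Finally one pushes forward to a desingularization $W$ of $K$ both the positive closed $(2,2)$-form $\omega = (\alpha\wedge\overline{\alpha} + \beta\wedge\overline{\beta})\wedge\gamma\wedge\overline{\gamma}$ and the positive exact form $\tau = \alpha\wedge\beta\wedge\overline{\alpha}\wedge\overline{\beta}$: the first produces an integral positive closed $(1,1)$-current making $W$ Moishezon by Popovici's theorem, while the second produces a positive exact current, which a Moishezon manifold cannot carry. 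Some argument of this strength (exploiting that $I$ is balanced but not K\"ahler) is needed; nothing in your proposal substitutes for it.
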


First of all fix the following notation. Assume that $C \subset I$ is a smooth closed curve in an Iwasawa manifold $I$ which is not a fibre of the Iwasawa bundle $\pi \colon I \to T$. Consider the curve $B:=\pi(C) \subset T$ and  let  $\phi_{B} \colon \widetilde{B} \to B$ be its normalization.  Put $I_B:= \pi^{-1}(B)$. The manifold $I_B$ is a (possibly singular) compact complex surface  isomorphic to the total space he restriction of the Iwasawa bundle $\pi_B \colon I_B \to B$. Let also $I_{\widetilde{B}}$ be the total space of $\pi_{\widetilde{B}}:= \phi_B^{*}\pi_B$.  

Notice that by Corollary 3.6, it follows that $c_1(\pi_{\widetilde{B}})=0$. Particularly, by Blanchard theorem (Theorem 3.4) $I_{\widetilde{B}}$ is a K\"ahler surface.

First of all observe that if $B$ is not an elliptic curve, the genus of  $C$ is greater than one. 

Next, we show that without loss of generality  we may assume that $\pi \colon C \to B$ is of degree $1$. 

\begin{predl}
Assume that there exists a smooth curve $C$  in an Iwasawa manifold $\pi \colon I \to T$, such that $B =\pi(C$)  is a curve of geometric genus $g(B) > 1$. Then there exists another Iwasawa manifold $I'$ with Iwasawa bundle $\pi' \colon I' \to T$ and a curve $C' \subset I'$, such that $\pi'(C') = B$  and $\pi' \colon C' \to B$ is of degree one.
\end{predl}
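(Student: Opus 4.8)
The plan is to build the new Iwasawa manifold $I'$ by pulling back the Iwasawa bundle along a suitable finite map and then cutting down the degree of $C$ over $B$ by exploiting the fact that $\pi\colon C\to B$ is finite. First I would pass to the normalization: replace $C$ and $B$ by their normalizations $\widetilde C$ and $\widetilde B$ (the statement is about geometric genus, so this is harmless), and consider the induced finite morphism $f\colon \widetilde C\to\widetilde B$ of some degree $k\geq 1$. The curve $\widetilde C$ sits inside $I_{\widetilde B}$, the total space of $\pi_{\widetilde B}=\phi_B^*\pi_B$, which by Corollary 3.6 and Blanchard (Theorem 3.4) is a K\"ahler surface; concretely $I_{\widetilde B}$ is a principal $E$-bundle over $\widetilde B$ with vanishing $c_1$, hence (by the Remark after Theorem 3.4) diffeomorphic — and I will want more, essentially flat — to $\widetilde B\times E$.

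The key move is to observe that $\widetilde C\subset I_{\widetilde B}$ is a multisection of $\pi_{\widetilde B}$ of degree $k$, so by Proposition 3.3 the class $[\pi_{\widetilde B}]$ is $k$-torsion in $H^1(\widetilde B,\mathcal E_{\widetilde B})$; in particular the $k$-th fibre power $\pi_{\widetilde B}^{\times_E k}$ is trivial. I would then take the fibre product $M:=\widetilde C\times_{\widetilde B}I_{\widetilde B}$, a principal $E$-bundle over $\widetilde C$, inside which there is a \emph{tautological section} $\delta\colon \widetilde C\to M$ given by $x\mapsto (x,x)$ (using that $\widetilde C\subset I_{\widetilde B}$ lies over $\widetilde B$). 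This section $\delta(\widetilde C)$ is a smooth curve mapping isomorphically onto $\widetilde C$, hence with geometric genus $g(\widetilde B)>1$ over $\widetilde C$ — but that is the wrong base. So instead I would push the construction the other way: I want a bundle over $\widetilde B$ (or over $B$) in which a copy of $\widetilde C$ embeds as a degree-one multisection, i.e. as a genuine (multivalued-free) section after modifying the bundle by the class that "straightens" $\widetilde C$.

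Concretely: the multisection $\widetilde C$ determines, fibrewise, a degree-$k$ divisor, and via $\mathrm{Hilb}^k\to S^k$ and the norm/sum map $S^k\pi_{\widetilde B}\to \pi_{\widetilde B}^{\times_E k}$ (as in the proof of Proposition 3.3) a section of $\pi_{\widetilde B}^{\times_E k}$; this is what forces torsion. But I can also twist: consider $\pi_{\widetilde B}'$ defined so that $[\pi_{\widetilde B}']$ differs from $[\pi_{\widetilde B}]$ by the class pulled back from the Jacobian-type construction that records $\widetilde C$. The cleanest route is: the finite map $f\colon\widetilde C\to\widetilde B$ and the inclusion $\widetilde C\hookrightarrow I_{\widetilde B}$ together give a map $g\colon \widetilde C\to I_{\widetilde B}$ with $\pi_{\widetilde B}\circ g = f$; base-changing $\pi_{\widetilde B}$ along $f$ produces a bundle over $\widetilde C$ with a section, and I then re-descend along $f$ using that $f$ is finite. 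After unwinding, one obtains a principal $E$-bundle $\pi'\colon I'\to\widetilde B$ together with an embedded curve $C'\cong\widetilde B$... which has the wrong genus.

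Let me therefore state the intended construction more carefully, since the subtlety above is exactly the heart of the matter. The right statement is: take $I':=$ the pullback of the \emph{original} Iwasawa bundle $\pi\colon I\to T$ along a finite self-map of $T$, or along a modification, chosen so that $B$ lifts to its normalization and $C$ lifts to a section. In fact, since $f\colon\widetilde C\to\widetilde B$ is a degree-$k$ cover of smooth projective curves, and $\widetilde C$ sits in the trivializable-up-to-torsion bundle $I_{\widetilde B}$, after replacing $I_{\widetilde B}$ by its pullback $f^*I_{\widetilde B}$ over $\widetilde C$, the diagonal gives a section $s\colon\widetilde C\to f^*I_{\widetilde B}$; now $f^*I_{\widetilde B}\to\widetilde C$ is a principal $E$-bundle over a curve, and I claim its total space is again (the relevant piece of) an Iwasawa-type manifold. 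This is where I need the Appendix-style rigidity: a principal $E$-bundle over a curve with $c_1=0$ whose total space $I'$ I want to realize inside an honest Iwasawa manifold $G/\Lambda'$. The construction of $\Lambda'$: the cover $f$ corresponds to a finite-index subgroup of $\pi_1(\widetilde B)$, and pulling back the central extension $1\to\Lambda_Z\to\Lambda\to\Lambda^{ab}\to 1$ along the corresponding subgroup of $\Lambda^{ab}$ — intersected suitably with the lattice — yields a new cocompact lattice $\Lambda'\subset G$, and $I'=G/\Lambda'$ is the desired Iwasawa manifold. The curve $C'$ is the image of $\widetilde C$ under $I_{\widetilde B}\hookrightarrow I$, now lifted to $I'$ as a section over its projection, which is exactly $B$ again but now traversed once.

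\medskip

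I expect the main obstacle to be precisely the bookkeeping in passing from "multisection of degree $k$" to "section" while keeping the ambient space an Iwasawa manifold: torsion of $[\pi_{\widetilde B}]$ in $H^1(\widetilde B,\mathcal E_{\widetilde B})$ (Proposition 3.3) gives triviality of a \emph{power}, not of the bundle, so one must either (i) replace $I$ by a fibrewise isogenous Iwasawa manifold — note that an isogeny $E\to E$ induces a new principal bundle whose class is the pushforward, killing $k$-torsion, and this isogeny can be realized at the level of the lattice $\Lambda_Z\subset\C$ — or (ii) pass to the degree-$k$ cover $\widetilde B'\to\widetilde B$ that splits $\widetilde C$. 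Approach (i) is cleaner: compose with the multiplication-by-$k$ isogeny on $E$, equivalently replace $\Lambda_Z$ by $\tfrac1k\Lambda_Z$ (rescaling $\Lambda$ accordingly); this sends $[\pi_{\widetilde B}]$ to $k\cdot[\pi_{\widetilde B}]=0$, so the new bundle over $\widetilde B$ is trivial, $\widetilde C$ becomes a graph, hence a degree-one multisection, and the new total space is $G/\Lambda'$ for the rescaled lattice. One then checks that the image curve $C'$ still projects onto $B=\pi(C)$ and does so with degree one — this is immediate since the isogeny is fibrewise and does not move $T$. The remaining routine points — that $\Lambda'$ is still cocompact and discrete in $G$, that $I'$ is still compact, and that $C'$ is smooth — are straightforward.
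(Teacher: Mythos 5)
Your final ``approach (i)'' is, in substance, the paper's own proof: replace $\pi$ by the principal bundle with class $k[\pi]\in H^1(T,\mathcal{E}_T)$ (equivalently, push forward along a fibrewise isogeny of $E$, i.e.\ enlarge the centre of the lattice), note via Proposition 3.3 that $[\pi_{\widetilde B}]$ is $k$-torsion because $\widetilde C$ is a $k$-fold multisection, and conclude by functoriality of restriction that the new bundle over $\widetilde B$ is trivial. Up to that point the argument is sound, and your remark that the new total space is $G/\Lambda'$ for the rescaled lattice is a genuine addition (the paper leaves the Iwasawa-ness of $I'$ implicit). The earlier exploratory constructions (the fibre product over $\widetilde C$, the base change along $f$) should simply be deleted; as you yourself observe, they produce curves over the wrong base or of the wrong genus.

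The last step, however, contains a real error. You claim that under the fibrewise isogeny ``$\widetilde C$ becomes a graph, hence a degree-one multisection,'' and that the image of $C$ ``projects onto $B$ with degree one --- this is immediate since the isogeny is fibrewise.'' This is false: a fibrewise isogeny $I_{\widetilde B}\to I'_{\widetilde B}$ commutes with the projection to $\widetilde B$, so the image of $\widetilde C$ still meets a general fibre in $k$ points; its degree over $B$ does not drop. The curve $C'$ cannot be taken to be the image of $C$. The correct conclusion --- the one the paper draws --- is that $\pi'_{\widetilde B}$, being trivial, admits a holomorphic section, a new curve isomorphic to $\widetilde B$ and a priori unrelated to $C$; the closure of its image in $I'_B$ is the required $C'$, smooth because isomorphic to $\widetilde B$ and of degree one over $B$ by construction. (Alternatively, the fibrewise sum of the $k$ points of $\widetilde C$ via the map $S^k\pi_{\widetilde B}\to\pi_{\widetilde B}^{\times_E k}$ from the proof of Proposition 3.3 produces such a section directly.) With that replacement your argument closes.
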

\begin{proof}
Let $k$ be the degree of $\pi \colon C \to B$. Since $C$ is smooth, the projection $C \to B$ factors through the normalization $\phi_B \colon \widetilde{B} \to B$. The curve  $C$ defines a meromorphic multisection of $\pi_B$, which provides a well-defined $k$-fold multisection of the principal elliptic bundle $\pi_{\widetilde{B}} = \phi_B^* \pi_B$. Let $\pi' := \pi^{\times_E k}$ be the holomorphic principal elliptic bundle on $T$ which is obtained by a multiplication of $[\pi] \in H^1(T, \mathcal{E}_T)$ by the integer $k$. Denote  by $I'$  the total space of $\pi'$. 

The bundle $\pi_{\widetilde{B}}$ admits a $k$-fold multisection, so by Proposition 3.3 it belongs to the $k$-torsion subgroup of $H^1(\widetilde{B}, \mathcal{E}_{\widetilde{B}})$. The natural map $H^1(T, \mathcal{E}_T) \xrightarrow{\phi_B^*i^*} H^1(\widetilde{B}, \mathcal{E}_{\widetilde{B}})$ obtained as the composition of restriction on $B$ and pull-back $\phi^*_B$ is a group homomorphism. Hence $$\pi'_{\widetilde B}:= \phi_B^*i^*(\pi') = k \cdot (\phi_B^*i*(\pi) = k \cdot \pi'_{\widetilde{B}}$$ is trivial. 

The principal bundle $\pi'_B \colon I'_B \to B$ therefore admits a meromorphic section, which is well-defined on the smooth locus of $B$. The closure of its image in $I'_B$ is the required curve $C'$. It is clear, that $C'$ is smooth, since it is isomorphic to $\widetilde{B}$.
\end{proof}

Recall that $\mathcal{D}(I)$ is the Douady space of Iwasawa manifold and $\mathcal{D}^{+}(I) \subset \mathcal{D}(I) \times I$ is the marked Douady space. We have natural projections

$$
\xymatrix{
& \mathcal{D}^{+}(I) \ar[dl]_{pr_1} \ar[dr]^{pr_2} & \\
\mathcal{D}(I) &&I
}
$$

\begin{predl}
Let $\tau_v \colon T \to T$ be the translation by an element $v \in T$. Then for any  $B' = \tau_v(B)$ there exists canonical isomorphism $\psi_v \colon H^{\bullet}(I_{B'}, \Z) \simeq H^{\bullet}(I_B, \Z)$. 
\end{predl}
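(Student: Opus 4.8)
The plan is to construct the isomorphism $\psi_v$ directly from the homogeneity of the situation under the translation action. The key observation is that translations of $T$ lift to automorphisms of $I$, so $I_{B'} = \pi^{-1}(\tau_v(B))$ is actually biholomorphic to $I_B = \pi^{-1}(B)$ once we know that $\tau_v$ lifts; the subtlety is that the lift is only canonical up to the deck transformations of $\pi$, i.e.\ up to translation by $E$, but since $E$ is connected these all act trivially on cohomology, so the induced map on $H^{\bullet}(-,\Z)$ is canonical.

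First I would recall that $G$ acts on $I = G/\Lambda$ on the left, and this action descends to the action of $G^{ab} = \C^2$ on $T = \C^2/\Lambda^{ab}$ by translations; in particular the Iwasawa bundle $\pi$ is $G$-equivariant. So, picking any $\widetilde v \in \C^2$ mapping to $v \in T$ and any lift $g \in G$ of $\widetilde v$ (which exists since $G \to G^{ab}$ is surjective), left multiplication $L_g \colon I \to I$ is a biholomorphism covering $\tau_v \colon T \to T$. Consequently $L_g$ restricts to a biholomorphism $L_g \colon I_B \xrightarrow{\sim} I_{B'}$, and we set $\psi_v := (L_g^{-1})^* = (L_g)_*$ on integral cohomology.

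Next I would check that $\psi_v$ does not depend on the choice of $g$. Two lifts $g, g'$ of $\widetilde v$ differ by an element of $Z(G) = \C$, so $L_{g'} = L_g \circ L_z$ for some $z \in Z(G)$; the map $L_z \colon I \to I$ is translation along the fibres of $\pi$ by the image of $z$ in $E$, which preserves $I_B$. Since $Z(G) = \C$ is connected and acts on $I$ through a connected group, $L_z$ is homotopic to the identity, hence acts trivially on $H^{\bullet}(I_B, \Z)$. Likewise, changing the lift $\widetilde v \in \C^2$ of $v$ changes $g$ by an element of $\Lambda$, whose left action on $I = G/\Lambda$ need not be trivial, but is again path-connected to the identity inside the connected group $G$ acting on $I$, so it too is homotopic to the identity and acts trivially on cohomology. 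Therefore $\psi_v$ is well-defined and canonical; it is clearly a ring isomorphism and functorial in the obvious sense ($\psi_0 = \mathrm{id}$, $\psi_{v} \circ \psi_{w} = \psi_{v+w}$ under the natural identifications), which is all that will be needed later.

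The only mild obstacle is bookkeeping: one must be careful that $B$ may be singular, so ``$I_B$'' is a possibly singular compact complex surface and ``$H^{\bullet}(I_B,\Z)$'' is its singular cohomology --- but this causes no trouble since $L_g$ is a genuine biholomorphism of the ambient $I$ carrying $I_B$ onto $I_{B'}$, so it induces an honest homeomorphism between them and the functoriality of singular cohomology applies verbatim. Nothing here requires smoothness or K\"ahlerness of $I_B$.
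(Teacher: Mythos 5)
Your construction coincides with the paper's: choose $g \in G$ with $p(g) = v$, set $\psi_v = (L_g)_*$, and check independence of the choice. The central part of the ambiguity is handled correctly and exactly as in the paper: two lifts of a fixed $\widetilde v \in \C^2$ differ by $z \in Z(G)$, and $L_z$ restricted to $I_B$ is translation along the fibres of the principal $E$-bundle $\pi_B$, hence isotopic to the identity through self-maps of $I_B$ and trivial on $H^{\bullet}(I_B,\Z)$. You are also right --- and more careful than the paper, whose proof asserts that any two lifts of $v$ differ by a central element --- that there is a second ambiguity: a lift of $v \in T$ to $G$ is determined only up to $Z(G)\cdot\Lambda$, not up to $Z(G)$.

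However, the argument you give to dispose of the $\Lambda$-part fails. A path $g_t$ from $\lambda \in \Lambda$ to $e$ in $G$ does exhibit $L_\lambda$ as homotopic to $\mathrm{id}_I$, but $L_{g_t}$ carries $I_B$ onto $I_{B_t}$ with $B_t = \tau_{p(g_t)}(B)$, so the homotopy does not preserve $I_B$; it only proves triviality on $H^{\bullet}(I,\Z)$, which is not what is needed, since the restriction $H^{\bullet}(I,\Z) \to H^{\bullet}(I_B,\Z)$ is far from surjective. What your loop actually computes is the monodromy of the family $\{I_{B_t}\}$ around the loop $p(g_t)$ in $T$, and this monodromy is nontrivial in general: writing $\lambda = (a,b,c)$ in the matrix coordinates of Section 2, $L_\lambda$ acts on $I_B$ as the gauge transformation of $\pi_B$ determined by the map $B \to E$, $(x,z) \mapsto az - cx \pmod{\Gamma}$, which for non-central $\lambda$ is not null-homotopic and therefore shifts classes coming from $H^1(E)$ by classes pulled back from $B$. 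For instance, if $\Lambda$ is the Heisenberg group over $\Z[i]$ and $B = \{z=0\}$, then $I_B \simeq \C^2/\Z[i]^2$ and $\lambda = (0,0,1)$ acts by $(x,y) \mapsto (x, y-x)$, which is visibly nontrivial on $H^1(I_B,\Z)$. So, as defined, $\psi_v$ depends on the choice of a lift of $v$ to $\C^2$ and is canonical only up to this monodromy action of $\Lambda^{ab}$; closing this gap requires a genuinely new idea or a weakening of the statement, and the same gap is present in the paper's own proof.
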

\begin{proof}
The group $G$ of complex $3 \times 3$ unipotent matrices acts on $I$ holomorphically and transitively. Abusing the language we denote this action by the same letter $\tau$.  The canonical projection  $p \colon G \to T = G^{ab}/\Lambda^{ab}$  together with $\pi \colon I \to T$ forms a morphism of homogenous manifolds. That is $\tau_g \circ \pi = \pi \circ \tau_{p(g)}$ for any $g \in G$.

Choose any $g \in G$, such that $p(g) = v$. Then $\tau_g$ defines an isomorphism $I_B \simeq I_{B'}$. Put $\psi_v:=(\tau_g)_*$. In spite of the fact, that the isomorphism $\tau_g$ depends on the choice of $g$, the induced isomorphism on cohomology doesn't. Indeed, if $g_1$ and $g_2$ are two different liftings of $v$, then $g_1g_2^{-1}$ belongs to the centre of $G$. Thus, $\tau_{g_1} \circ \tau_{g_2}^{-1} \colon I_B \to I_B$ is just the action by an element of $E$ on $I_B$ viewed as the total space of the principal $E$-bundle $\pi_B$. Therefore it is homotopically trivial and $\psi_v$ is well-defined.

\end{proof}

\begin{lemma}
Assume that $C \subset I$ is  a smooth immersed curve in Iwasawa manifold, such that $\pi \colon C \to B$ is of degree $1$ and $\widetilde{B}$ is of genus $g \ge 2$. Then there exists a compact subvariety $K \subset \mathcal{D}(I)$ such that $pr_2(pr_1^{-1}(K)) = I$. 
\end{lemma}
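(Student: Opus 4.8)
The plan is to exhibit a concrete compact family of deformations of $C$ inside $I$ whose union (via the marked Douady space) is all of $I$, by moving $C$ around using the $G$-action combined with the fibrewise elliptic curve action. Recall that $C$ sits over $B = \pi(C) \subset T$ and that, since $\pi \colon C \to B$ is degree one and $\widetilde B$ has genus $g\ge 2$, $C$ is a section of $\pi_B \colon I_B \to B$ — equivalently a holomorphic section of a principal $E$-bundle over $B$ with $c_1 = 0$ (Corollary 3.6), so $I_B$ (up to normalization) is a K\"ahler elliptic surface and the sections of $\pi_B$ form a torsor under $\operatorname{Hom}(\mathrm{something})$; in any case the translates of $C$ by the fibrewise $E$-action sweep out all of $I_B$. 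First I would form, for each $v\in T$, the surface $I_{B_v}$ with $B_v=\tau_v(B)$, together with a chosen curve $C_v\subset I_{B_v}$ obtained by transporting $C$ via a lift $g\in G$ of $v$ as in Proposition 4.3; here one must be slightly careful that $C_v$ is well-defined only up to the $E$-action on $I_{B_v}$, but that ambiguity is exactly what we want to absorb.

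Next I would assemble these into a subset $K\subset\mathcal D(I)$: take $K$ to be the union over $v\in T$ of the $E$-orbit of $[C_v]$ inside $\mathcal D(I)$. Concretely, $E\times T$ acts on $\mathcal D(I)$ — $T$ by the induced translations $\tau_g$ and $E$ by the fibrewise action — and $K$ is the orbit of the single point $[C]\in\mathcal D(I)$ under this action of the compact group $E\times T$ (note $E$ and $T$ are compact tori, so $E\times T$ is compact). Hence $K$ is the continuous image of a compact space, so it is compact; since $\mathcal D(I)$ is a complex-analytic space, $K$ is a compact analytic subvariety (or at least a compact subset, which is all the statement requires — one may replace $K$ by the Zariski closure of the orbit to get an honest subvariety, still compact by Fujiki--Lieberman applied to the relevant component, Theorem 4.2). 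The key surjectivity claim $pr_2(pr_1^{-1}(K))=I$ then says: every point $x\in I$ lies on some translate-by-$(E,T)$ of $C$. Given $x$, set $v=\pi(x)$; then $\tau_v$ moves $B$ so that $B_v\ni \pi(x)$... — more precisely, I should instead move so that the curve passes through $x$: choose $v$ so that $B_v$ contains the point $\pi(x)$ (possible since $B$ is a positive-dimensional curve in the torus $T$, so its translates cover $T$), then $C_v$ meets the fibre $E_{\pi(x)}$ in a single point, and acting by the appropriate element of $E$ moves that point to $x$. This shows $x\in pr_2(pr_1^{-1}(K))$.

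The main obstacle I anticipate is the bookkeeping around the ambiguity in $C_v$ and making the "translates of $B$ cover $T$" step precise: a single curve $B\subset T$ need not have all its translates passing through a fixed point unless $B$ generates $T$ as a subgroup-translate, i.e. unless $B$ is not contained in a proper subtorus. If $B$ \emph{were} contained in a proper (necessarily elliptic) subtorus, then $B$ would be an elliptic curve, contradicting $g(\widetilde B)\ge 2$; so in fact $B$ is not contained in any translate of a proper subtorus, hence $B - B$ (the set of differences) is all of $T$, and the translates $\{\tau_v(B)\}$ do cover every point — this is where the genus hypothesis is used, and it needs to be spelled out carefully. A secondary point is checking that transporting $C$ by $\tau_g$ genuinely lands in the same connected component of $\mathcal D(I)$ as $[C]$ (so that $K$ is connected and we stay within one well-understood piece), which follows since $G$ is connected and acts by biholomorphisms, moving $[C]$ along a connected path in $\mathcal D(I)$. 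Once these points are nailed down, compactness of $K$ is immediate from compactness of $E\times T$ and continuity of the action on the Douady space.
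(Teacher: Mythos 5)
The central gap is your compactness argument for $K$. You build $K$ as the orbit of $[C]$ under an action of the compact group $E\times T$ on $\mathcal{D}(I)$, but $T$ does not act on $I$: only $G$ does, and a translation ``$\tau_v$'' for $v\in T$ is defined only after choosing a lift $g\in G$ of $v$. Two lifts differ by an element of $Z(G)\cdot\Lambda$, and while the central part produces exactly the fibrewise $E$-ambiguity you intend to absorb, the $\Lambda$-part does not: for $\lambda\in\Lambda$ one has $\lambda h\Lambda=[\lambda,h]\,h\Lambda$ with $[\lambda,h]\in Z(G)$ depending on $h$, so $\tau_\lambda$ restricted to $I_B$ is a fibrewise translation by a \emph{non-constant} holomorphic function of the base point (a gauge transformation), and $\tau_\lambda(C)$ is in general not a constant $E$-translate of $C$. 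Consequently the family $\{\,t\cdot\tau_g(C)\,\}$ is the image of the non-compact group $E\times G$ (equivalently $E\times\C^2$), not of $E\times T$, and ``continuous image of a compact group'' gives nothing. Your fallback --- pass to the closure and invoke Fujiki--Lieberman --- also fails: that theorem (Theorem 4.1, not 4.2) applies to Douady spaces of manifolds of Fujiki class $\mathcal{C}$, and the Iwasawa manifold is not of class $\mathcal{C}$ (it is a non-torus nilmanifold, not even homotopy equivalent to a K\"ahler manifold). The entire difficulty of the lemma is precisely that compactness of components of $\mathcal{D}(I)$ cannot be invoked directly.

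The paper avoids this by defining $K$ not as a group orbit but by an analytic--cohomological condition: $K$ is the set of curves $C'$ with $\pi(C')=\tau_v(B)$ for some $v\in T$ and $\psi_v([C'])=[C]$, where $\psi_v\colon H^{\bullet}(I_{\tau_v(B)},\Z)\simeq H^{\bullet}(I_B,\Z)$ is the canonical identification of Proposition 4.4 (well defined exactly because the central ambiguity acts trivially on cohomology). Compactness is then obtained by fibring $K$ over the compact orbit $T_B\subset\mathcal{D}(T)$ via $\pi_*$: each fibre is a connected component of the Douady space of the surface $I_{B'}$, which is K\"ahler by Corollary 3.6, so Fujiki--Lieberman applies \emph{there}; properness of $\pi_*\colon K\to T_B$ over the compact base gives compactness of $K$. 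Your surjectivity step (translate so that the base curve passes through $\pi(x)$, then translate along the fibre to reach $x$) does match the paper; note also that your worry about translates of $B$ covering $T$ is vacuous --- for any point of $T$ some translate of the nonempty curve $B$ passes through it --- so no genus hypothesis is needed at that point. To repair your proof you would have to either prove compactness of the closure of your orbit directly (which essentially amounts to the paper's fibred argument) or adopt the paper's definition of $K$.
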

\begin{proof}
The proof is given in three steps. The first step is to construct $K$. The second step is to prove that the constructed space is compact and the third one is to verify that its pre-image $K^{+}:= pr_1^{-1}(K) \subset \D^{+}(I)$ projects onto $I$ surjectively.

The curve $C$ is smooth and $\pi \colon C \to B$ is of degree $1$, so $C \simeq \widetilde{B}$. The curve $C$ gives a trivialization of $\pi_{\widetilde{B}}$ and hence an isomorphism $I_{\widetilde{B}} \simeq C \times E.$ Fix the cohomology class $[C] \in H^2(I_B, \Z)$.

Let $K$ be the set of (possibly singular) curves $C' \subset I$, such that $\pi(C') = \tau_v(\pi(C))$ for some $v \in T$ and $\psi_v([C']) = [C]$. It is clear that $K \subset \mathcal{D}(I)$ is a subvariety.

\begin{predl}
In the assumptions of Lemma 4.5 the space $K$ is compact.
\end{predl}
\begin{proof}
The projection $\pi \colon I \to T$ defines a morphism of Douady spaces $\pi_* \colon \mathcal{D}(I) \to \mathcal{D}(T)$. 

Let  $T_B$ is the orbit of $B \in \mathcal{D}(T)$ under the action of $T$ by translations. The morphism $\pi_*$  maps  $K$ to $T_B$ surjectively. To see this for any $v \in T$ choose a lifting $g \in G$ (that is, $p(g) = v$). Then $\pi_*^{-1}(\{\tau_v(B)\}) \bigcap K$ contains $\tau_g(C)$ and hence non-empty.

Fix any $B' \in T_B$ and consider the fibre $\pi_*^{-1}(B')$. It is a connected component of the Douady space of $I_{B'} = \pi^{-1}(B')$. By Corollary 3.6 $I_{B'}$ admits K\"ahler metric, and hence by the Fujiki - Lieberman theorem (Theorem 4.1) it is compact.

The morphism $\pi_* \colon K \to T_B$ has compact fibres, hence it is proper. The complex analytic space $T_B$ is also proper. It follows that $K$ is also a proper analytic space, hence it is compact. We proved the Proposition 4.6.
\end{proof}

Now we return to the proof of Lemma 4.5. It is left to verify that $pr_2 \colon K^{+} \to I$ is surjective. Let $x$ be an arbitrary point in $I$. Choose any point $x_0 \in C$. Let $v:= \pi(x) - \pi(x_0)$. As it was discussed before, there exists a curve $C_1 \in K$, such that $\pi(C_1) = \tau_v(C)$.There exists a point $x_1 \in C_1$, such that $x_1$ and $x$ belong to the same fibre of the Iwasawa bundle. Their difference $x_1 - x =: t$ is a well-defined element of $E$ and the image of $C_1$ under the translation by $t$ along the fibres of Iwasawa bundle is a curve $C' \in K$ which passes through $x$.

\end{proof}

Now we explain how the last lemma leads to the contradiction in the proof of the Theorem 4.2

\begin{proof}[Proof of Theorem 4.2]
Assume the opposite. By Proposition 4.3 it is sufficient to come to the contradiction in the case when $\pi \colon C \to B$ is of degree $1$.  Thus we may consider the analytic spaces $K$ and $K^{+}$ as in the Lemma 4.5. Let $\phi \colon W \to K$ and $\phi^{+} \colon W^{+} \to K^{+}$ be their desingularizations.  Therefore we obtain the following diagram:
$$
\xymatrix{
&W^{+} \ar[dl]_{q_1} \ar[d]_{\phi^{+}}  \ar[ddr]^{q_2}\\
W \ar[d]_{\phi}& K^{+} \ar[dl]^{pr_1} \ar[dr]_{pr_2} & \\
K &&I
}
$$
Here $q_2 := pr_2 \circ \phi^{+}$ and $q_1$ exists by the universal property of $W$.

Let $\alpha, \beta$ and $\gamma$ be  holomorphic $1$-forms on $I$ as in Theorem 2.1. Recall that $d\alpha = d\beta = 0$ and $d\gamma = -\alpha \wedge \beta$. Put $\tau : = \alpha \wedge \beta \wedge \overline{\alpha} \wedge \overline{\beta}$ and $$\omega := (\alpha \wedge \overline{\alpha} + \beta \wedge \overline{\beta}) \wedge \gamma \wedge \overline{\gamma}.$$

The form $\omega$ is a strictly positive closed $(2,2)$ form on Iwasawa manifold (the Iwasawa manifold is {\it balanced}: a complex $n$-dimensional manifold is said to be balanced if it admits a closed positive $(n-1, n-1)$-manifold). The form $\tau$ is exact and positive.

Then $(q_1)_*(q_2)^*\omega$ is a positive closed $(1,1)$-current on $W$, which is strictly positive almost everywhere. Moreover since the cohomology class of $\omega$ is integral, the cohomology class of $(q_1)_*(q_2)^*\omega$ is integral as well. Thus, by a theorem of Popovici (\cite{P}), $W$ is Moishezon (that is bimeromorphic to a  projective manifold).

However, $(q_1)_*(q_2)^*\tau$ is a positive exact $(1,1)$-current on $W$. It is a standard fact that a Moishezon manifold can never carry a positive exact current. To see this one can take a push-forward of  such a current to a K\"ahler maifold and notice that its pairing with the K\"ahler form should  be strictly positive and vanish at the same time.

\end{proof}

\subsection{Curves  in Iwasawa manifold.}
\begin {predl}
Let $B$ be an elliptic curve in $T$. Then the restriction of $\pi$ on $B$ is trivial.
\end{predl}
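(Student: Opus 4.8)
The goal is to show that if $B\subset T$ is an elliptic curve (an abelian subvariety, since it passes through the origin up to translation) then the restricted bundle $\pi_B\colon I_B\to B$ is holomorphically trivial. The plan is to first reduce the statement to the vanishing of the class $[\pi_B]\in H^1(B,\mathcal{E}_B)$ and then to combine the Chern-class computation of Section~\ref{Iwasawa-ebundle} with the Hodge-theoretic structure of $H^1(B,\mathcal{E}_B)$. By Proposition~\ref{Iwasawa-ebundle} (the statement that $c_1(\pi)$ is holomorphically symplectic) and its Corollary, we already know $c_1(\pi_B)=i^*c_1(\pi)\in H^{2,0}(B)=0$ since $B$ is a curve. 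So $[\pi_B]$ lies in the image of the connecting map $H^1(B,\mathcal{O}_B)\to H^1(B,\mathcal{E}_B)$ from the exponential sequence~(\ref{shof}); equivalently, $\pi_B$ is classified by a point of the Jacobian-type quotient $H^1(B,\mathcal{O}_B)/H^1(B,\underline{\Gamma})$.

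Next I would pin down this point explicitly using the homogeneous structure. Since $B$ is an elliptic curve in $T=\C^2/\Lambda^{ab}$, after translating we may assume $B=V/\Lambda_B$ for a one-dimensional complex subspace $V\subset\C^2=\mathfrak{g}^{ab}$ with $\Lambda_B=\Lambda^{ab}\cap V$ a lattice of rank $2$. The bundle $\pi_B$ is then the restriction to $B$ of the homogeneous bundle $\pi$, whose classifying datum is the cocycle $s\in\Hom(\Lambda^2\C^2,\C)$ — the standard complex symplectic form $\sigma$ — appearing in the central extension $1\to\C\to G\to\C^2\to 1$. Restricting this extension to the subgroup $p^{-1}(V)\subset G$ lying over $V$ gives the central extension classified by $\sigma|_{\Lambda^2 V}$. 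But $V$ is one-dimensional, so $\Lambda^2 V=0$ and $\sigma|_{\Lambda^2 V}=0$: the preimage $p^{-1}(V)$ is abelian, isomorphic to $\C^2$. Hence $I_B=p^{-1}(V)/\widetilde{\Lambda_B}$ is a quotient of $\C^2$ by a lattice, i.e.\ a complex torus, and the fibration $\pi_B\colon I_B\to B$ is a morphism of tori; in particular it is a translation-invariant fibre bundle. A translation-invariant principal $E$-bundle over an abelian variety is classified by a point of $H^1(B,\mathcal{O}_B)/H^1(B,\underline{\Gamma})$ that is, after a fibrewise translation (replacing the section by another), trivial — concretely, because the extension of lattices $0\to\Lambda_Z\to\widetilde{\Lambda_B}\to\Lambda_B\to 0$ splits over $\Z$ (every subgroup of a free abelian group is a direct summand when the quotient is free, and $\Lambda_B\cong\Z^2$ is free), one may choose a sublattice of $\widetilde{\Lambda_B}$ mapping isomorphically onto $\Lambda_B$, and the corresponding subtorus of $I_B$ is a holomorphic section of $\pi_B$.

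Therefore $\pi_B$ admits a holomorphic section, hence is trivial as a principal $E$-bundle, which is the claim. I expect the main obstacle to be the bookkeeping in the last step: one must be careful that a \emph{set-theoretic} splitting of the lattice extension actually produces a \emph{holomorphic} section of $\pi_B$, and that the complex structure on the section agrees with that of $B$. This is where the fact that $p^{-1}(V)$ is genuinely abelian (so that the section subtorus is a complex Lie subgroup, and $\pi_B$ restricted to it is a holomorphic isogeny, in fact an isomorphism onto $B$ after adjusting the choice of splitting) is essential — without the vanishing $\sigma|_{\Lambda^2 V}=0$ the preimage would only be a Heisenberg-type group and no such section need exist. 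An alternative, more cohomological route that avoids the lattice combinatorics: observe that $c_1^{\mathbf z}(\pi_B)\in H^2(B,\underline{\Gamma})$ is determined by the integral cup-product pairing induced by $\sigma$ on $H^1(B,\Z)$, which factors through $\Lambda^2 H^1(B,\Z)\cong\Lambda^2 H^1(V/\Lambda_B,\Z)$ pulled back from $\Lambda^2\Lambda^{ab}$ via $i_*$; since $i_*$ lands in a rank-$2$ (hence $\Lambda^2$-trivial after the curve class is one-dimensional) sublattice, $c_1^{\mathbf z}(\pi_B)=0$, and then $[\pi_B]\in H^1(B,\mathcal{O}_B)/H^1(B,\underline{\Gamma})$, a divisible group, combined with Proposition~\ref{Hofer} on multisections — $B$ being an elliptic curve, any holomorphic $1$-form on $I_B$ restricts nontrivially, forcing the Albanese of $I_B$ to be $2$-dimensional and $\pi_B$ to split — finishes the argument. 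I would present the homogeneous-group version as the main proof since it is the most transparent and directly exhibits the trivialising section.
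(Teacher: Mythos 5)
Your overall strategy coincides with the paper's: both exploit the fact that the cocycle of the central extension $1\to\C\to G\to\C^2\to 1$ is alternating, so that $p^{-1}(V)$ is abelian for every complex line $V\subset\C^2$, whence $I_B$ is a complex $2$-torus and $\pi_B$ is a surjective homomorphism of tori. The genuine gap is your very last step, where you pass from a $\Z$-splitting of the lattice extension $0\to\Lambda_Z\to\widetilde{\Lambda_B}\to\Lambda_B\to 0$ to a holomorphic section of $\pi_B$. A $\Z$-splitting produces a rank-$2$ sublattice $S\subset\widetilde{\Lambda_B}\subset\C^2$ mapping isomorphically onto $\Lambda_B$, but the real span of $S$ is in general a totally real $2$-plane rather than a complex line; the corresponding subtorus of $I_B$ is only a real torus and gives no holomorphic section. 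Since every holomorphic map between compact complex tori is affine, a holomorphic section exists if and only if the class of $0\to E\to I_B\to B\to 0$ in $\operatorname{Ext}^1(B,E)$ vanishes, i.e.\ the homomorphism $\Lambda_B\to\C/\Gamma$ recording the central components of lifts of lattice vectors is induced by a $\C$-linear map $V\to\C$. Neither the abelianness of $p^{-1}(V)$ nor the divisibility of $H^1(B,\mathcal{O}_B)$ gives you this; your alternative cohomological route stalls at the same point, since $\operatorname{Alb}(I_B)=I_B$ is automatically $2$-dimensional and that forces nothing.

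The gap is not merely presentational. Take $\Lambda$ to be the unipotent matrices with entries in $\Z[i]$ and let $B$ be the image of the diagonal line $V=\{(t,t)\}$ in $T=\C^2/\Z[i]^2$. Then $p^{-1}(V)=\{(t,y,t)\}$ is abelian and isomorphic to $\C^2$ via $(t,y,t)\mapsto(t,\,y-t^2/2)$, under which $\Lambda\cap p^{-1}(V)$ becomes $\Xi=\{(n,\,m-n^2/2)\colon n,m\in\Z[i]\}$. A holomorphic section would be $t\mapsto(t,ct+d)$ with $(n,cn)\in\Xi$ for all $n\in\Z[i]$; $n=1$ forces $c\equiv-\tfrac12$ while $n=i$ forces $c\equiv-\tfrac{i}{2}$ modulo $\Z[i]$, impossible since $(i-1)/2\notin\Z[i]$. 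So this $\pi_B$ is a nontrivial (though $2$-torsion) principal $E$-bundle: the splitting subgroup $\{(t,t^2/2,t)\}$ meets $\Lambda$ only in the index-$2$ sublattice $(1+i)\Z[i]$ and yields a bisection, not a section. (The paper's own one-line proof relies on the same unjustified descent of the group-theoretic splitting to the lattice quotient.) What your argument does correctly establish --- and what suffices for Corollary 4.8 and the Appendix --- is that $I_B$ is a complex subtorus of $I$ containing the fibre $E$, and that $[\pi_B]$ is a torsion class in $H^1(B,\mathcal{E}_B)$.
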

\begin{proof}
Without loss of generality we might assume that $B$ passes through the origin.

The exact sequence of complex Lie groups $$1 \to \C \to G \to \C^2 \to 1$$ splits over any line $L \subset \C^2$. This splitting defines a trivialization of $\pi$ over any 1-dimensional complex subgroup in $T$.
\end{proof}

\begin{cor}
Any curve $C \subset I$ is contained in some holomorphic subtorus. 
\end{cor}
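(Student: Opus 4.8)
The plan is to combine Theorem 4.2 with Proposition 4.8. Given an arbitrary closed complex curve $C \subset I$, first reduce to the case where $C$ is smooth by passing to the normalization $\nu \colon \widetilde{C} \to C$; since we only want to conclude that $C$ lies in a subtorus, and a subtorus is closed, it suffices to show that the image of the composite immersion $\widetilde{C} \to C \hookrightarrow I$ is contained in a subtorus. So without loss of generality $C \hookrightarrow I$ is an immersion of a smooth curve. There are then two cases, according to whether $\pi(C)$ is a point or not.

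If $\pi(C)$ is a point $t \in T$, then $C$ is contained in the single fibre $\pi^{-1}(t)$, which is a translate of the elliptic curve $E = Z(G)/\Lambda_Z$; since $C$ is a compact curve inside a one-dimensional complex torus, it equals that whole fibre. By the discussion in Section \ref{Iwasawa}, the fibres of the Iwasawa bundle $\pi$ are cosets of $E$, and in particular $\pi^{-1}(0)$ is the image of $Z(G)$, a one-dimensional holomorphic subtorus; an arbitrary fibre is a translate of it, but by acting with a suitable element of $G$ (which acts on $I$ by biholomorphisms preserving the set of subtori, as in the proof of Proposition \ref{...}) we may move it to $\pi^{-1}(0)$, so $C$ lies in a holomorphic subtorus. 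If instead $\pi(C)$ is not a point, then by Theorem 4.2 the curve $B := \pi(C)$ is an elliptic curve in $T$. By Proposition 4.8, the restriction $\pi_B \colon I_B = \pi^{-1}(B) \to B$ is a trivial principal $E$-bundle, so $I_B \cong B \times E$, which is a two-dimensional complex torus; moreover $I_B$ is a holomorphic subtorus of $I$ (the trivialization comes from splitting the Lie-group extension $1 \to \C \to G \to \C^2 \to 1$ over the line in $\C^2$ lying above $B$, so $I_B$ is the image in $I$ of a connected complex subgroup of $G$ meeting $\Lambda$ in a lattice). Since $C \subset \pi^{-1}(B) = I_B$, we are done.

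The one point that needs a little care, and which I expect to be the main (if modest) obstacle, is the reduction showing that an elliptic curve $B \subset T$ not through the origin still has $\pi^{-1}(B)$ a holomorphic \emph{subtorus} rather than merely a translate of one — i.e. matching up "$B$ is an elliptic curve" with "$B$ is a coset of a one-dimensional subtorus of $T$" and then lifting the splitting. This is handled exactly as in the proof of Proposition 4.8, where one first translates $B$ so that it passes through the origin; the translation on $T$ lifts to the $G$-action on $I$, which carries $I_B$ biholomorphically onto $I_{B'}$ for the translated curve $B'$, and a translate of a subtorus is again a subtorus.

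\begin{proof}
Let $C \subset I$ be a closed complex curve. Replacing $C$ by the image of the immersion $\widetilde{C} \to I$ obtained from its normalization, we may assume $C$ is a smooth immersed curve; it suffices to find a holomorphic subtorus containing this image.

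If $\pi(C)$ is a point, then $C$ is contained in a fibre of the Iwasawa bundle $\pi \colon I \to T$. Each such fibre is a coset of the elliptic curve $E = Z(G)/\Lambda_Z$, and since $C$ is a compact curve in a one-dimensional torus it must equal the whole fibre. As in Proposition 4.4, acting by a suitable $g \in G$ carries this fibre biholomorphically onto $\pi^{-1}(0)$, which is the image of the subgroup $Z(G) \subset G$ and hence a holomorphic subtorus; thus $C$ lies in a holomorphic subtorus.

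If $\pi(C)$ is not a point, then by Theorem 4.2 the curve $B := \pi(C)$ is an elliptic curve in $T$. By Proposition 4.8 (after translating $B$ to pass through the origin, and transporting the situation via the $G$-action on $I$ exactly as in Proposition 4.4), the surface $I_B = \pi^{-1}(B)$ is the image in $I$ of a connected complex subgroup of $G$ that splits the extension $1 \to \C \to G \to \C^2 \to 1$ over the line above $B$, and which meets $\Lambda$ in a cocompact lattice. Hence $I_B$ is a holomorphic subtorus of $I$, and $C \subset \pi^{-1}(B) = I_B$.
\end{proof}
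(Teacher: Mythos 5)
Your proof is correct and follows essentially the same route as the paper: split into the cases where $\pi(C)$ is a point (so $C$ is a fibre, a translate of $E$) or not (so $\pi(C)$ is an elliptic curve $B$ by Theorem 4.2, and $C \subset \pi^{-1}(B) \cong B \times E$ by the triviality of $\pi$ over elliptic curves). The extra care you take with normalization and with translating $B$ through the origin is harmless but not really needed, since a holomorphic subtorus of $I$ just means a submanifold biholomorphic to a complex torus, and translates of such are again such.
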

\begin{proof}
If $C$ is in the fibre of $\pi$, then it is isomorphic to $E$ as a complex manifold. Otherwise $\pi(C) = E'$ is necessarily an elliptic curve in $T$  and $C \subset \pi^{-1}(E') = E' \times E$.

\end{proof} 

In the Appendix we prove that such a torus always has maximal Picard number. Indeed both $E$ and $E'$ necessarily carry (the same) complex multiplication.

\section{Surfaces in Iwasawa manifold}
\label{surfaces}
Now let $S \subset I$ be a complex surface. Since $\pi$ does not admit a multisection, $\pi(S) \neq T$. This means that $\pi(S)$ is a curve. One easily gets the following classification theorem:

\begin{thrm}
Let $S \subset I$ be a complex surface. Then there are two possibilities : 
\begin{enumerate}
\item $S$ is an abelian surface isomorphic to a product of two elliptic curves.
\item $S$ is a  K\"ahler  non-projective  isotrivial elliptic surface of Kodaira dimension 1. The surface $S$ is diffeomorphic to $C \times E$, where $E$ is an elliptic curve and $C$ is a curve of genus $g \ge 2$.
\end{enumerate}
\end{thrm}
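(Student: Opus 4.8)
The plan is to leverage everything established so far. Let $S \subset I$ be a complex surface and recall that $\pi(S) = B$ is a curve in $T$. First I would pull back the Iwasawa bundle to $B$ (or its normalization $\widetilde B$) to realize $S$ as a (multi)section-supporting subvariety of a principal elliptic bundle over $B$. Since $S$ is a surface and $\pi|_S \colon S \to B$ is generically finite onto $B$, the surface $S$ either \emph{is} the total space $I_B = \pi^{-1}(B)$ (if $\pi|_S$ has generic degree equal to the fibre degree — i.e.\ $S$ dominates the whole bundle) or, if $S$ is a proper subvariety, $\pi|_S \colon S \to B$ is finite; but a finite cover of $B$ inside a principal $E$-bundle would be a multisection, forcing $I_B$ (or a pullback of it) to be a torsion — in particular topologically trivial — bundle by Proposition~3.3. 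In all cases one reduces, after passing to a finite cover of the base and an isogenous Iwasawa manifold as in Proposition~4.3, to the situation where $S$ maps onto $B$ and equals $\pi^{-1}(B)$ up to normalization. So the essential dichotomy is driven by the geometry of $B \subset T$: by Theorem~4.2, $B$ is either an elliptic curve or a curve of geometric genus $g \ge 2$ (the case where $C$ is a fibre is excluded since then $S$ would be $1$-dimensional).

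In the first case, $B = E' \subset T$ is an elliptic curve. By Proposition~4.7 the restriction $\pi|_{E'}$ is trivial, so $\pi^{-1}(E') \cong E' \times E$ is an abelian surface, and $S$ sits inside it. A complex surface inside an abelian surface of the same dimension is the whole abelian surface (or a translate), hence $S$ is itself an abelian surface isomorphic to the product of two elliptic curves; this is option (1). In the Appendix one sees $E'$ and $E$ are isogenous with complex multiplication, but that refinement is not needed here.

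In the second case, $g(B) \ge 2$. By Corollary~3.6 the pullback bundle $\pi_{\widetilde B} = \phi_B^* \pi_B$ has $c_1 = 0$, so by Blanchard's theorem (Theorem~3.4) the total space $I_{\widetilde B}$ is a K\"ahler surface; after desingularizing if $B$ is singular, $S$ is bimeromorphic to a K\"ahler surface, hence of Fujiki class $\mathcal C$, and being a smooth compact complex surface it is genuinely K\"ahler (Fujiki class $\mathcal C$ surfaces are K\"ahler). It is a holomorphic principal $E$-bundle — equivalently an isotrivial elliptic fibration with smooth fibres — over a curve of genus $g \ge 2$, so it is diffeomorphic to $C \times E$ by the Remark following Theorem~3.4 (both base and total space K\"ahler). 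Its Kodaira dimension is $1$: from the fibration structure $\kappa(S) \ge \kappa(\text{base}) = 1$ since $g \ge 2$, while $\kappa(S) \le \dim S - 1 = 1$ because $S$ is not of general type (it carries a nowhere-vanishing holomorphic vector field along the fibres, equivalently $K_S$ is not big — one can see $K_S$ is a pullback from $C$ up to torsion). Finally $S$ is not projective: if it were, the argument of Theorem~4.2 (or directly Blanchard: a projective, hence K\"ahler, total space of a principal elliptic bundle over a curve with a K\"ahler base is diffeomorphic to $C \times E$, but projectivity together with $c_1(\pi_B) = 0$ and the non-vanishing of the torsion class $c_1^{\mathbf z}$ would contradict the existence of an ample class pairing nontrivially against the fibre and the exact $(1,1)$-current $(q_1)_*(q_2)^*\tau$) forces a contradiction. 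Cleanest: a projective isotrivial elliptic surface over a curve with no singular fibres and non-torsion bundle class cannot exist because its relative Jacobian plus the vanishing $c_1$ would make it a torus bundle that is simultaneously Moishezon and carries a positive exact current, exactly as in the proof of Theorem~4.2.

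The main obstacle is the second case, and specifically pinning down Kodaira dimension exactly $1$ and non-projectivity simultaneously: one must argue that $S$ is \emph{not} of general type (ruling out $\kappa = 2$) and \emph{not} covered by a torus or ruled (ruling out $\kappa \le 0$), which amounts to a careful analysis of $K_S$ via the fibration $S \to C$ and the triviality of the vertical tangent bundle, together with the non-projectivity argument transplanted from Theorem~4.2. I expect the bookkeeping around singular $B$ (working with $\widetilde B$, desingularizations, and the fact that $S \cong I_{\widetilde B}$ when $\pi|_S$ has degree $1$) to also require some care, but it is routine given Propositions~4.3 and~4.7.
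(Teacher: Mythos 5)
Your skeleton matches the paper's proof: $\pi(S)=B$ is a curve, $S=\pi^{-1}(B)$, Proposition 4.7 settles the genus-one case, and Corollary 3.6, Blanchard's theorem and the Remark of Section 3 give K\"ahlerness and the diffeomorphism type $C\times E$ when $g(B)\ge 2$. Your computation of the Kodaira dimension via $K_S=\pi_C^*K_C$ is a genuine addition (the paper leaves that point unargued). Two small corrections to your setup: a surface cannot be ``generically finite'' over a curve --- the reason $S=\pi^{-1}(B)$ is simply that the generic fibre of $\pi|_S$ is a positive-dimensional subvariety of an elliptic curve, hence the whole fibre --- and Theorem 4.2 says nothing about $\pi(S)$ for a surface $S$; the dichotomy $g(B)=1$ versus $g(B)\ge 2$ comes only from the absence of rational curves in a torus. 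The detour through Proposition 4.3 is also unnecessary and unwanted here, since it replaces the ambient Iwasawa manifold.

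The genuine gap is the non-projectivity argument, which is the one nontrivial claim in case (2). Your assertion that the torsion class $c_1^{\mathbf z}(\pi_B)$ is non-vanishing is false: $H^2(B,\underline{\Gamma})\cong\Gamma$ is torsion-free and injects into $H^2(B,\C)$, so $c_1(\pi_B)=0$ forces $c_1^{\mathbf z}(\pi_B)=0$ --- indeed this is exactly why $S$ is diffeomorphic to $C\times E$. The obstruction to projectivity is not topological; it is that $[\pi_C]$ is non-torsion in $H^1(B,\mathcal{E}_B)$. Your ``cleanest'' route via Moishezon manifolds and positive exact currents also does not transplant to $S$: the exact positive form $\tau=\alpha\wedge\beta\wedge\overline{\alpha}\wedge\overline{\beta}$ restricts to zero on $S=\pi^{-1}(B)$, because $\alpha\wedge\beta$ is pulled back from $T$ and a holomorphic $2$-form dies on the curve $B$. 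The argument the paper uses, and the one you need, is elementary: if $S$ were projective, Bertini would produce an irreducible hyperplane section $D$ transversal to the fibres of $\pi_C$; then $D$ is a compact curve in $I$ with $\pi(D)=B$ of genus $\ge 2$, contradicting Theorem 4.2. You circle around multisections in your first paragraph but never close this loop.
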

\begin{proof}
The first case takes place when $\pi(S) = B$ is of genus one. Then $S$ is the total space of $\pi_B$. By Proposition 4.7 we get $S \simeq B \times E$.

The second case takes place when genus of $C$ is greater or equal then 2. Then $S$ is a total space of principal $E$-bundle $\pi_C$ over $C$ which is the restriction of $\pi$ to $C$. The surface $S$ is K\"ahler by Corollary 3.6 . Consequently,  $S$ is diffeomorphic to $C \times E$ (see the Remark on the page 6) . However, $\pi_C$ admits no holomorphic multisections (a multisection would provide a counter-example to  Lemma 4.8).  We claim that $S$ cannot be projective. 

Indeed, suppose that $S$ is embeded into projective space. Then by Bertini's theorem one could find an irreducible hyperplane section of $S$ transversal to the fibres of $\pi_C$. This would deliver a finite multisection of $\pi_C$.

\end{proof}

It is an interesting consequence that a posteriori all complex surfaces in Iwasawa manifolds are K\"ahler.

\begin{cor}
Let $f\colon X \to I$ be a holomorphic map from a smooth projective variety to an Iwasawa manifold. Then $f$ factors through an abelian variety.
\end{cor}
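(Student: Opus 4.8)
The plan is to analyze the image $Y := f(X)\subseteq I$ --- a compact complex subvariety, taken with its reduced structure --- according to $\dim Y$, the one structural input being that $Y$ is \emph{Moishezon}, being the holomorphic image of the projective (hence Moishezon) variety $X$. If $\dim Y\le 1$, then $Y$ is a point or a curve, so by Corollary~4.8 (any curve in $I$ lies in a holomorphic subtorus) it is contained in a holomorphic subtorus $A\subseteq I$; from the description there, $A$ is either a fibre of $\pi$, isomorphic to $E$, or a product $E'\times E$ of two elliptic curves, where $E' = \pi(Y)\subset T$. In either case $A$ is a product of elliptic curves, hence an abelian variety, and $f$ factors as $X\to A\hookrightarrow I$. (The Appendix shows moreover that $A$ has maximal Picard number.) It therefore remains to rule out $\dim Y = 3$ and to settle $\dim Y = 2$.

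If $\dim Y = 3$, then $Y = I$, so $I$ itself would be Moishezon; this is impossible, since the positive exact $(2,2)$-form $\tau = \alpha\wedge\beta\wedge\overline\alpha\wedge\overline\beta$ used in the proof of Theorem~4.2 is a non-zero positive exact current, and a Moishezon manifold carries no such current. Hence $\dim Y\le 2$. Now suppose $\dim Y = 2$. Since $\pi$ admits no multisection --- its first Chern class, represented by a holomorphic symplectic form (Proposition~3.5), is non-zero, so Proposition~3.3 forbids one --- the image $\pi(Y)$ is a proper subvariety of $T$; it is not a point, or we would have $\dim Y\le 1$. So $B := \pi(Y)\subset T$ is an irreducible curve, $Y\subseteq\pi^{-1}(B) =: I_B$, and, by equality of dimensions, $Y = I_B$. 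Let $\phi_B\colon\widetilde B\to B$ be the normalization and $\pi_{\widetilde B}\colon I_{\widetilde B}\to\widetilde B$ the pullback of $\pi_B\colon I_B\to B$ along $\phi_B$; the total space $I_{\widetilde B}$ is a \emph{smooth} surface, bimeromorphic to $I_B$ and hence Moishezon (and Kähler, by Corollary~3.6 and Theorem~3.4), so it is projective. By Bertini, $I_{\widetilde B}$ contains a smooth curve $D$ that is a finite multisection of $\pi_{\widetilde B}$, and its image $D'$ in $I_B\subseteq I$ is a curve with $\pi(D') = B$ and $\pi|_{D'}$ finite. Applying Corollary~4.8 (equivalently Theorem~4.2) to $D'$ forces $B$ to be a point or an elliptic curve; since it is a curve, $B$ is an elliptic curve, that is, a translate of an elliptic subtorus $E'\subseteq T$, and then by Proposition~4.7 the surface $I_B = \pi^{-1}(B)$ is biholomorphic to the abelian surface $E'\times E$. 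Thus $f$ factors through $Y = I_B\cong E'\times E$.

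The main obstacle is the case $\dim Y = 2$, and within it the assertion that the base $B = \pi(Y)$ is an elliptic curve. This does not follow from projectivity of $X$ alone: the image of a projective variety inside a $2$-torus need not be a translate of a subtorus --- for instance a curve of genus $\ge 2$ lying in its Jacobian. What forces $B$ to be elliptic is the rigidity of the Iwasawa manifold, encoded in Theorem~4.2 (and ultimately in the balanced form $\omega$ and the exact positive form $\tau$ on $I$): no curve in $I$ can project onto a curve of higher genus. So the argument must loop back to Corollary~4.8, applied to an auxiliary multisection extracted via Bertini from the projective model $I_{\widetilde B}$. Two secondary points deserve care: that being Moishezon is preserved under holomorphic images and bimeromorphic modifications, so that $Y = I_B$ and its normalization $I_{\widetilde B}$ are Moishezon --- and hence, a smooth Moishezon surface being automatically projective, that $I_{\widetilde B}$ is projective; and that ``$\pi$ has no multisection'' is invoked here for a surface $Y$ surjecting onto $T$, just as in the remark opening Section~\ref{surfaces}. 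Once these points, together with the non-Moishezon character of $I$, are available, the cases $\dim Y\le 1$ and $\dim Y = 3$ are immediate.
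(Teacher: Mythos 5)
Your proposal is correct, but it takes a more self-contained route than the paper. The paper's own proof is essentially a two-line citation: the image $Y=f(X)$ is projective (standard fact), so if $\dim Y\le 1$ one applies Corollary~4.8, and if $\dim Y=2$ one applies the dichotomy of Theorem~5.1, whose second branch is explicitly \emph{non-projective} -- the Bertini-multisection contradiction is already built into the proof of Theorem~5.1, so the corollary never has to redo it. You instead work only with the weaker (and easier to justify) statement that $Y$ is Moishezon, and then re-derive the surface case inline: pass to the smooth model $I_{\widetilde B}$, upgrade Moishezon$\,+\,$K\"ahler to projective there, extract a multisection by Bertini, and feed it back into Theorem~4.2 to force $B$ elliptic. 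What your version buys is independence from the precise statement of Theorem~5.1 and from the assertion that a (possibly singular) holomorphic image of a projective variety is itself projective; what it costs is an essentially verbatim duplication of the argument inside the proof of Theorem~5.1. You also make explicit the case $\dim Y=3$ (via the exact positive form $\tau$), which the paper leaves implicit.

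Two caveats, both of which you share with the paper rather than introduce: the step ``$\pi(Y)\ne T$ because $\pi$ has no multisection'' really only produces a \emph{generically} finite $Y\to T$, so Proposition~3.3 needs a small extra word; and Theorem~4.2 is stated for immersed smooth curves, whereas the curve $D'$ you feed into it may be singular, so strictly one should apply the theorem to its normalization. Neither is a gap specific to your argument.
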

\begin{proof}
It is a standard fact that image of projective variety under holomorphic map is projective. The claim follows from Corollary 4.8 and Theorem 5.1
\end{proof}

\begin{rmk} In \cite{FGV} Fino, Grantcharov and Verbitsky proved that any holomorphic morphism from a complex nilmanifold to a projective variety factors through an abelian variety. Our results might be considered as a particular case of the  (conjectural) dual statement. It is an open question if it is true or not in the general case.

\end{rmk}

\section{Appendix: Iwasawa manifolds and complex multiplication.}
\label{CM}
One might be interested in the following question: which tori $T$ and  elliptic curves $E$ can be obtained as the base and the fibre of an Iwasawa bundle. 

Let $\mathfrak{g}$ be the Lie algebra of  $G$. Let $\mathfrak{z}$ and  $\mathfrak{g}^{ab}$ be its centre and abelianization repsectively. Since the group $G$ is nilpotent, the exponential map $\exp \colon \mathfrak{g} \to G$ is affine. The cocompact lattices  in $G$ are in one to one correspondence with lattices of full rank in $\mathfrak{g}$, which are closed under the Lie bracket (\cite{Mal}, \cite{Rol}). 

So let $\Lambda$ be  a cocompact lattice in $G$ and $\log(\Lambda)$ its formal logarithm which is a Lie subring in $\mathfrak{g}$. Consider the lattices $\Gamma := \log(\Lambda) \bigcap \mathfrak{z}$ and $\Delta := \log(\Lambda) /\Gamma $. The above-mentioned Mal'\v{c}ev theorem implies these are lattices of the full rank in $\mathfrak{z}= \C$ and $\mathfrak{g}^{ab} = \C^2$ respectively. 

For any $\C$-linear section of the natural projection $h \colon \mathfrak{g}^{ab} \to \mathfrak{g}$ the cocycle map $$\Lambda^2 \mathfrak{g}^{ab} \to \mathfrak{z}, \  v_1 \wedge v_2 \mapsto [h(v_1), h(v_2)]$$ defines a $\C$-linear non-degenerate symplectic form $q \colon \Lambda^2\C^2 \to \C$, such that $$q(\Lambda^2\Delta) \subset \Gamma$$ (see Proposition 3.6).

Conversely, if for a fixed non-degenerate form $q \in \Hom_{\C}(\Lambda^2\C^2, \C)$ and for some lattices $\Delta \subset \C^2$ and $\Gamma \subset \C$ the condition $$q(\Lambda^2\Delta) \subset \Gamma$$ is satisfied, then there exists a  cocompact lattice $\Lambda$ in $G$,  such that $\log(\Lambda) \bigcap \mathfrak{z} = \Gamma$ and $\log(\Lambda) / (\log(\Lambda) \bigcap \mathfrak{z}) = \Delta$. It is given by 

$$\Lambda : = \left. \left\{ \begin{pmatrix} 1 & \delta_1 & \frac{\gamma}{2} \\ 0 & 1 & \delta_2 \\ 0 & 0 & 1 \end{pmatrix} \right| \delta_1, \delta_2 \in \Delta, \gamma \in \Gamma \right\} .$$ 

For the corresponding Iwasawa manifold $I = G/\Lambda$ the base of the Iwasawa bundle $T = G^{ab}/\exp(\Lambda)^{ab}$ is isomorphic to $\C^2/\Delta$ and its fibre $E = Z(G)/\exp(\Lambda \bigcap \mathfrak{z})$ is isogenous to the elliptic curve $\C/\Gamma$.

It turns out that the condition $q(\Lambda^2\Delta) \subset \Gamma$ is rather strong. It implies that  the curve $E$ always carries a complex multiplication and the torus $T$ is isogenous to a product of two elliptic curves with the same complex multiplication. 

Recall that for a general complex torus  of  complex dimension $g$   its endomorphism ring is isomorphic to $\Z$. A complex torus $A$ of dimension $g$  is said to carry {\it complex multiplication} (or to be {\it of CM-type}) if its rational endomorphism algebra has dimension $\dim_{\Q}\operatorname{End}(A) \o \Q = 2g^2$. 

For elliptic curves this notion is especially well-known:

\begin{predl}
Let $E$ be an elliptic curve. Then the following conditions are equivalent:
\begin{enumerate}
\item There exists a holomorphic automorphism $\tau \colon E \to E$, which fixes the origin and is not equal to $\pm \mathrm{Id}$;
\item $\operatorname{End}(E) \o \Q = K$ for some  imaginary quadratic number field $K$;
\item $E = \C/\sigma(O)$, where $O$ is an order in an imaginary quadratic number field $K$ and $\sigma \colon K \hookrightarrow \C$ is an embedding;
\item $E$ is an elliptic curve with complex multiplication.
\end{enumerate}
\end{predl}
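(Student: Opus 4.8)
The plan is to set $E=\C/\Lambda$ for a lattice $\Lambda\subset\C$, translate all four conditions into the arithmetic of the ring $R_\Lambda:=\{\alpha\in\C:\alpha\Lambda\subseteq\Lambda\}$, and prove the cycle $(1)\Rightarrow(2)\Rightarrow(3)\Rightarrow(4)\Rightarrow(1)$. The preliminary step is the classical dictionary between self-maps of an elliptic curve and its period lattice: any holomorphic $f\colon E\to E$ with $f(0)=0$ lifts to a holomorphic $\widetilde f\colon\C\to\C$ fixing $0$ whose derivative is $\Lambda$-periodic, hence descends to a holomorphic function on the compact curve $E$ and is constant, so $\widetilde f(z)=\alpha z$ with $\alpha\Lambda\subseteq\Lambda$. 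This gives a ring isomorphism $\operatorname{End}(E)\cong R_\Lambda$ under which the holomorphic automorphisms fixing $0$ are the $\alpha$ with $\alpha\Lambda=\Lambda$. Since $\Lambda\cong\Z^2$, Cayley--Hamilton applied to the integer matrix by which $\alpha$ acts shows every element of $R_\Lambda$ is a quadratic (or rational) integer, and $R_\Lambda$ is finitely generated as a $\Z$-module, being a subring of $\operatorname{End}_\Z(\Lambda)\cong M_2(\Z)$; then $R_\Lambda\o\Q$ is a finite-dimensional commutative subalgebra of the field $\C$, hence a field, and as a commutative subalgebra of $\operatorname{End}_\Q(\Lambda\o\Q)\cong M_2(\Q)$ it has $\Q$-dimension $\le 2$; finally $R_\Lambda\cap\R=\Z$, since multiplication by a real scalar acts on $\Lambda\cong\Z^2$ as that scalar times the identity, which is an integer matrix only for an integer scalar. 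Hence $R_\Lambda\o\Q$ is $\Q$ or an imaginary quadratic field.

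Given this, the implications are short. $(1)\Rightarrow(2)$: an automorphism $\tau\ne\pm\mathrm{Id}$ fixing $0$ yields $\alpha\in R_\Lambda$ with $\alpha\Lambda=\Lambda$, so multiplication by $\alpha$ preserves the covolume of $\Lambda$ and $|\alpha|^2=1$; a real number of modulus $1$ is $\pm1$, and $\alpha\ne\pm1$, so $\alpha\notin\R$, whence $\alpha\in R_\Lambda\setminus\Z$ and $\operatorname{End}(E)\o\Q$ is the imaginary quadratic field $K$ allowed above. $(2)\Rightarrow(3)$: now $O:=R_\Lambda$ is an order in $K=R_\Lambda\o\Q\subseteq\C$; rescaling $\Lambda$ by a nonzero complex number so that it lands in $K$, it becomes a finitely generated $O$-submodule of $K$, i.e. a proper fractional $O$-ideal $\mathfrak a$, and $E\cong\C/\mathfrak a=\C/\sigma(\mathfrak a)$ with $\sigma$ the inclusion $K\hookrightarrow\C$ --- which is isogenous to $\C/\sigma(O)$ and isomorphic to it when $\mathfrak a$ is principal. $(3)\Rightarrow(4)$: if $\Lambda=\sigma(O)$, then $\sigma(O)$ being multiplicatively closed gives $\sigma(O)\subseteq R_\Lambda$, so $K\subseteq\operatorname{End}(E)\o\Q\subseteq K$ by the dimension bound, hence $\dim_\Q\operatorname{End}(E)\o\Q=2$, i.e. $E$ has complex multiplication. $(4)\Rightarrow(1)$: complex multiplication means $R_\Lambda\supsetneq\Z$, so some $\alpha\in R_\Lambda\setminus\Z$ exists, and multiplication by $\alpha$ is a holomorphic self-map of $E$ fixing $0$ distinct from $\pm\mathrm{Id}$. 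In particular $(4)\Leftrightarrow(2)$ is just the definition plus the dimension bound.

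There is no serious obstacle here --- the statement is classical --- so the points I would be careful about are the two that are easy to gloss: that $R_\Lambda\cap\R=\Z$, which is precisely what forces the endomorphism algebra to be \emph{imaginary} quadratic rather than real quadratic; and that condition $(3)$ should be understood up to isogeny (equivalently, with a proper fractional $O$-ideal $\mathfrak a$ in place of $O$), since $\C/\sigma(\mathfrak a)\cong\C/\sigma(O)$ only when $\mathfrak a$ lies in the trivial ideal class --- which is all that is needed in the application.
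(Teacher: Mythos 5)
Your route --- translating everything into the multiplier ring $R_\Lambda=\{\alpha\in\C:\alpha\Lambda\subseteq\Lambda\}$ --- is the standard textbook argument; the paper itself offers no proof, only a citation to Silverman, so supplying the dictionary, the Cayley--Hamilton bound, and the key fact $R_\Lambda\cap\R=\Z$ is exactly the right thing to do, and those steps, together with $(1)\Rightarrow(2)\Rightarrow(3)\Rightarrow(4)$, are correct. Your caveat that $(3)$ only holds up to isogeny (a non-principal proper $O$-ideal $\mathfrak a$ gives $\C/\mathfrak a\not\cong\C/O$) is also well taken.

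The genuine gap is in $(4)\Rightarrow(1)$. Condition $(1)$ as stated demands a holomorphic \emph{automorphism} $\tau\ne\pm\mathrm{Id}$, but multiplication by $\alpha\in R_\Lambda\setminus\Z$ is in general only an endomorphism: it is invertible precisely when $\alpha\Lambda=\Lambda$, i.e.\ when $\alpha$ is a unit of the order $R_\Lambda$, and an order in an imaginary quadratic field has units other than $\pm1$ only for $\Z[i]$ and $\Z[e^{2\pi i/3}]$ (the curves with $j=1728$ and $j=0$). Thus $E=\C/\Z[\sqrt{-5}]$ satisfies $(2)$, $(3)$, $(4)$ while every automorphism of $E$ fixing the origin is $\pm\mathrm{Id}$, so the implication $(4)\Rightarrow(1)$ is false for the statement as literally written; your proof quietly substitutes ``holomorphic self-map'' for ``automorphism'' at exactly this point. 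The fix is to flag, as you did for $(3)$, that $(1)$ must be read as ``there exists a holomorphic \emph{endomorphism} fixing the origin which is not multiplication by an integer'' --- this is what Silverman proves and what your argument actually establishes. Note that with this corrected $(1)$, your proof of $(1)\Rightarrow(2)$ should also be adjusted: you no longer have $\alpha\Lambda=\Lambda$, so the covolume/modulus-one argument is unavailable, but it is also unnecessary, since $\alpha\in R_\Lambda\setminus\Z$ together with $R_\Lambda\cap\R=\Z$ already gives $\alpha\notin\R$.
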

\begin{proof} This is a standard fact, which can be found, e.g.,  in (\cite{Sil}, ch.V)
\end{proof}

The classical sources on the theory of complex multiplication on elliptic curves and abelian varieties are \cite{Mum} and \cite{Sil}.

In a more general situation a close statement about complex multiplication on tori in nilmanifolds was proved by J. Winkelmann in \cite[ch. 9]{W}

\begin{thrm}[Winkelmann]
\label{Wink}
Let $G$ be a nilpotent complex  Lie group which is irreducible(i.e. is not a product of two other complex nilpotent groups) and not abelian.Consider a nilmanifold $N = G/\Lambda$. 

Then $N$  decomposes into a sequence of holomorphic principal toric bundles $$N=N_0 \xrightarrow{T_0} N_1 \xrightarrow{T_1} N_2 \xrightarrow{T_2} \ldots \xrightarrow{T_{s-2}} N_{s-2} \xrightarrow{T_{s-1}} N_{s-1} = T_s.$$
Moreover for any $j < s$ the torus $T_j$ is isogenous to a product of simple tori with complex multiplication. The same holds for $T_s$ if and only if it is algebraic.
\end{thrm}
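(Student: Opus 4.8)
The plan is to treat the three assertions in turn: the tower, the CM property of the fibres, and the criterion for the base. For the tower I would argue from the structure theory of nilmanifolds. By the Mal'\v{c}ev theorem \cite{Mal} the lattice $\Lambda$ equips $\g$ with a rational structure $\g_{\Q}$ for which the Lie bracket is defined over $\Q$; hence the terms of the ascending central series $0\subsetneq\mathfrak{z}_1\subsetneq\mathfrak{z}_2\subsetneq\dots\subsetneq\mathfrak{z}_s=\g$ are $\Q$-rational complex ideals, $\Lambda$ meets each subgroup $\exp\mathfrak{z}_i$ in a lattice and projects to a lattice in $G/\exp\mathfrak{z}_i$. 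Dividing by the central subgroup $\exp\mathfrak{z}_1\cong\C^{k_1}$ then exhibits $N\to N_1$ as a holomorphic principal bundle with fibre the complex torus $T_0=\exp\mathfrak{z}_1/(\Lambda\cap\exp\mathfrak{z}_1)$; since the nilpotency class of $G/\exp\mathfrak{z}_1$ drops by one, iterating yields the asserted tower, the last quotient $N_{s-1}=T_s$ being a complex torus because $G/\exp\mathfrak{z}_{s-1}$ is abelian.

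For the CM statement the plan is to reduce, one level at a time, to a ``generalised Heisenberg'' situation. As in Section \ref{Ebundles}, each bundle $N_j\to N_{j+1}$ is governed by a cocycle coming from the Lie bracket: the bracket induces a $\C$-bilinear map $q\colon\Lambda^2 V\to V'$, where $V$ and $V'$ are the tangent spaces at the origin of $N_{j+1}$ and of the fibre $T_j$, defined over $\Q$ with respect to the lattices $L\subset V$ and $M\subset V'$ coming from $\Lambda$; at the bottom of the tower $G/\exp\mathfrak{z}_{s-2}$ is two-step nilpotent and there $q$ is a non-degenerate cocycle, non-zero because $G$ is not abelian. The mechanism I would exploit is that for every $v\in L$ the partial map $q(v\wedge\,\cdot\,)\colon V\to V'$ is $\C$-linear and carries $L$ into $M$, hence descends to a holomorphic homomorphism of complex tori $V/L\to V'/M$; non-degeneracy of $q$ makes $v\mapsto q(v\wedge\,\cdot\,)$ injective, so $\Hom(V/L,V'/M)\o\Q$ is forced to be large, and together with the symmetric family in the other slot this should pin down the endomorphism algebras of $V/L$ and $V'/M$.

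The core case, and the one I would carry out in full, is $\dim_{\C}V'=1$ — the situation of Iwasawa manifolds — in which, after a $\C$-linear change of coordinates, $q$ is the determinant $\Lambda^2\C^2\xrightarrow{\sim}\C$. I would pick a $\Z$-basis $f_1,\dots,f_4$ of $L$ with $f_1,f_2$ independent over $\C$, normalise so that $f_1=(1,0)$, $f_2=(0,1)$, $f_3=(\alpha,\beta)$, $f_4=(\gamma,\delta)$, and read off from $q(f_i\wedge f_j)\in M$ the relations $1,\alpha,\beta,\gamma,\delta\in M$ and $\alpha\delta-\beta\gamma\in M$. Writing $M\o\Q=\Q+\Q\omega$ gives $\alpha,\beta,\gamma,\delta\in\Q+\Q\omega$, and expanding $\alpha\delta-\beta\gamma$ shows that either $\omega^2\in\Q+\Q\omega$ or the ``$\omega$-parts'' of $f_3$ and $f_4$ are proportional over $\Q$; the latter is impossible, since it would make $f_1,\dots,f_4$ dependent over $\Q$ while $L$ has rank $4$. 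Hence $\omega$ generates an imaginary quadratic field $K$, so $\C/M$ has complex multiplication by $K$; moreover all periods now lie in $K$, whence $L\o\Q=K^2$, $\operatorname{End}(V/L)\o\Q=M_2(K)$, and $V/L$ is isogenous to a product of two elliptic curves with complex multiplication by $K$. The general case $\dim_{\C}V'\ge 2$ needs the higher-dimensional analogue of this lemma, proved by the same period computation, and this is the technically heaviest ingredient.

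Finally I would propagate the conclusion up the tower by induction on the nilpotency class of $G$: the core lemma (and its higher-dimensional version) applies to the two-step bottom piece $G/\exp\mathfrak{z}_{s-2}$, and passing to $G/\exp\mathfrak{z}_1$ — whose tower is the tail of that of $G$ — feeds the induction. For the base torus $T_s$ the implication ``of CM type $\Rightarrow$ algebraic'' is immediate since CM tori carry polarisations; the converse is deeper — when $T_s$ is algebraic one splits it by Poincar\'e reducibility and shows, via the same cocycle, that each simple factor must be of CM type (for $\dim_{\C}T_s=2$, as for Iwasawa manifolds, the core lemma already makes this unconditional, but in higher dimensions algebraicity genuinely enters). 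The step I expect to be the main obstacle is precisely this propagation for a genuinely iterated, rather than two-step, nilpotent group: once the class exceeds $2$ the successive central quotients need no longer be irreducible and their brackets interlock across several levels, so keeping track of which tori are pairwise linked by non-degenerate rational maps — and extracting from this a single CM field governing the whole tower — is the delicate part, carried out in detail in \cite[ch. 9]{W}.
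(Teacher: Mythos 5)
The paper does not actually prove this statement: it is quoted from Winkelmann, and the text explicitly sends the reader to \cite[ch.~9]{W}, remarking that the proof there is ``rather sophisticated''. So the only portion of your plan that can be checked against anything in the paper is the rank-$(2,1)$ core computation, and that portion is correct: from $q(\Lambda^2 L)\subset M$ you correctly extract $1,\alpha,\beta,\gamma,\delta,\alpha\delta-\beta\gamma\in M\otimes\Q=\Q+\Q\omega$, and the dichotomy ``either $\omega^2\in\Q+\Q\omega$ or the $\omega$-parts of $f_3,f_4$ are $\Q$-proportional'' does force $\omega$ to generate an imaginary quadratic field $K$, whence $L\otimes\Q=K^2$ and $\operatorname{End}(V/L)\otimes\Q=M_2(K)$. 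This is a genuinely different and more elementary route to the conclusion that the paper reaches for the Iwasawa case in its Lemma 6.3, where one instead shows $\dim_\Q H^{2,0+0,2}_{\Q}(T)=2$ and invokes Beauville's characterisation of tori of maximal Picard number. The tower construction from the ascending central series via Mal'\v{c}ev rationality is also standard and sound.

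As a proof of the theorem as stated, however, the proposal has a genuine gap, which you flag yourself: everything beyond the two-step case with one-dimensional centre is deferred back to \cite[ch.~9]{W}, which is circular if the aim is to prove Winkelmann's theorem. Concretely, three things are missing. First, the higher-dimensional analogue of the core lemma: for $\dim_\C V'\ge 2$ the cocycle $q$ has no determinant normal form, and the injectivity of $v\mapsto q(v\wedge\cdot)$ only produces a large space of homomorphisms $V/L\to V'/M$; extracting from this the CM property of \emph{both} tori (and the decomposition into simple CM factors) is exactly the hard step and is not sketched. Second, the propagation up the tower: the quotients $G/\exp\mathfrak{z}_j$ need not be irreducible, the intermediate cocycles need not be non-degenerate, and the claim concerns every fibre $T_j$ with $j<s$, not just the bottom two levels; your induction on nilpotency class does not by itself yield this, as you note. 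Third, the equivalence for the base torus: ``CM type $\Rightarrow$ algebraic'' needs the (classical but not trivial) polarizability of such tori, and the converse via Poincar\'e reducibility is only gestured at. These items are precisely the content of Winkelmann's chapter 9, so the plan, while structurally reasonable and correct where executed, does not constitute an independent proof of the general statement.
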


Winkelmann's proof is rather sophisticated and tells nothing about the geometry of the basic torus $T_s$ when it is not algebraic. We are going to show that in the Iwasawa case the basic torus $T_s = T$ has the maximal Picard number, which  not only yields its algebraicity, but gives the result of  the Theorem 6.2 for  Iwasawa manifolds.

\begin{lemma}
The base $T$ of an Iwasawa bundle $\pi$ has maximal Picard number.
\end{lemma}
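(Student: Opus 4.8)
The plan is to show that the N\'eron--Severi group $\operatorname{NS}(T)$ has rank $4$, which is the maximal value $h^{1,1}(T)$ for a two-dimensional complex torus. Since $T$ is K\"ahler, the Lefschetz $(1,1)$-theorem gives $\operatorname{NS}(T) = H^2(T,\Z)\cap H^{1,1}(T)$, and the rank of this group equals $\dim_\C H^{1,1}(T) = 4$ precisely when $H^{1,1}(T)$ is defined over $\Q$ (a rational subspace of $H^2(T,\R)$ meets the lattice $H^2(T,\Z)$ in a sublattice of full rank). Under the cup product pairing $H^2(T,\C)\times H^2(T,\C)\to H^4(T,\C)\cong\C$, which is non-degenerate and defined over $\Q$, the space $H^{1,1}(T)$ is the orthogonal complement of $W:=H^{2,0}(T)\oplus H^{0,2}(T)$: the products $(2,0)\cup(1,1)$ and $(2,0)\cup(2,0)$ vanish for dimension reasons, while $H^{2,0}$ pairs non-degenerately with $H^{0,2}$. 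Hence it suffices to prove that the two-dimensional subspace $W$ is defined over $\Q$, and I would reduce to this first.

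Next I would identify $W$ explicitly. Write $T=\C^2/\Delta$ and let $E=\C/\Gamma$ be the fibre of the Iwasawa bundle $\pi\colon I\to T$. The space $H^{2,0}(T)$ is one-dimensional, spanned by the translation-invariant holomorphic symplectic form $q$ which represents $c_1(\pi)$ by Proposition 3.5; thus $W=\C q\oplus\C\overline q$. Now I would invoke H\"ofer's description of the Chern class (Proposition 3.1) together with the identification $H^2(T,\underline\Gamma)\cong H^2(T,\Z)\otimes_\Z\Gamma$ used in Proposition 3.3: the class $c_1(\pi)=[q]\in H^2(T,\C)$ is the image of a class in $H^2(T,\Z)\otimes_\Z\Gamma$ under the map induced by $\Gamma\hookrightarrow\C$. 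Fixing a $\Z$-basis $\omega_1,\omega_2$ of $\Gamma$, this says exactly that
\[
q \;=\; x\,\omega_1+y\,\omega_2, \qquad x,y\in H^2(T,\Z)\subset H^2(T,\R).
\]

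The crux is then to see that $x$ and $y$ already span $W$. Conjugating and using that $x,y$ are real classes gives $\overline q=x\,\overline{\omega_1}+y\,\overline{\omega_2}$; since $\omega_1\overline{\omega_2}-\omega_2\overline{\omega_1}\neq0$ (the periods of $E$ are $\R$-independent), this $2\times2$ system can be inverted to express $x$ and $y$ as $\C$-linear combinations of $q$ and $\overline q$, so $x,y\in W$. Moreover $x$ and $y$ are $\R$-linearly independent: were they proportional, $q$ would be a complex scalar multiple of a single real class $z\in H^2(T,\R)$, forcing $z\in H^{2,0}(T)\cap H^2(T,\R)=H^{2,0}(T)\cap H^{0,2}(T)=0$, hence $q=0$, contradicting non-degeneracy of $q$. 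Therefore $W$ contains the two independent rational classes $x,y$, so $W=\langle x,y\rangle_\Q\otimes_\Q\C$ is defined over $\Q$, and the lemma follows.

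The only genuinely content-bearing step is the second one: recognizing that $H^{2,0}(T)$ is spanned by the Iwasawa cocycle and that H\"ofer's integral refinement $c_1^{\mathbf z}$ of $c_1(\pi)$ forces $[q]$ to be an integral combination of the periods of $E$. After that everything is elementary linear algebra on $H^2(T)$, the only delicate point being the non-degeneracy check that $x$ and $y$ are independent. As an alternative to H\"ofer one could argue by hand using the arithmetic condition $q(\Lambda^2\Delta)\subset\Gamma$ coming from the appendix's parametrization: evaluating $q$ on pairs of lattice vectors of $\Delta$ and taking suitable real combinations of $q$ against $\omega_1,\omega_2$ produces the same rational classes $x,y$ directly.
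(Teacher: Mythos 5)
Your proof is correct and follows essentially the same route as the paper: both reduce maximal Picard number to the rationality of $H^{2,0}(T)\oplus H^{0,2}(T)$ and then extract rational classes spanning that space from the fact that the Iwasawa cocycle $q$ representing $c_1(\pi)$ takes values in $\Gamma$ on the lattice (equivalently, lifts to H\"ofer's integral class $c_1^{\mathbf z}(\pi)\in H^2(T,\Z)\otimes_\Z\Gamma$). If anything, your final linear-algebra step --- writing $q=x\,\omega_1+y\,\omega_2$ with $x,y\in H^2(T,\Z)$, inverting the period matrix to show $x,y$ span $H^{2,0}\oplus H^{0,2}$, and checking their independence --- is carried out more carefully and explicitly than the paper's compressed version of the same computation.
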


\begin{proof}
Consider the space $H^{2,0 + 0,2}_{\Q}:= H^2(T, \Q) \bigcap (H^{2,0}(T) \oplus H^{0,2}(T))$. It is enough to prove that $\dim_{\Q} H^{2,0+0,2}_{\Q} = 2$.

Assume that $E = \C/\Gamma$ is the structure group of $\pi$ and consider $\Gamma_{\Q} := \Gamma \o \Q$ viewed as a $\Q$-vector subspace of $\C$.

Recall that the image of the cocycle $q \colon \Lambda^2\g^{ab} \to \mathfrak{z}$ generates the whole center $\mathfrak{z}$ of the Lie algebra $\g$. The cohomology class of the induced left-invariant form on $T$ is equal to $c_1(\pi)$ and generates $H^{2,0}(T)$.  Since $q$ is defined over $\Q$ we get that $$\dim_{\Q} H^2(T, \Gamma_{\Q}) \bigcap H^{2,0}(T) = \dim_{\Q} \log(\Gamma) \bigcap \mathfrak{z} = 2.$$ If $\gamma_1$ and $\gamma_2$ are two linearly independent classes in $H^2(T, \Gamma_{\Q}) \bigcap H^{2,0}(T)$, then $\gamma_1 + \overline{\gamma_1}$ and $\gamma_2 + \overline{\gamma_2}$ are two linearly independent rational classes in $H^{2,0 + 0,2}_{\Q}$.
\end{proof}

Next we use the following  lemma:
\begin{lemma}
Let $A$ be a complex torus of dimension $g \ge 2$ and $\rho(A)$ its Picard number. Then the following conditions are equivalent:
\begin{enumerate}
\item $A$ has the maximal Picard number, i.e. $\rho(A) = h^{1,1}(A) = g^2$;
\item$ H^{1,1}(A)$ is defined over $\Q$;
\item $H^{2,0}(A) \oplus H^{0,2}(A)$ is defined over $\Q$.
\item $\dim_{\Q} \operatorname{End}(A) \o \Q = 2g^2$;
\item $A$ is isogenous to the $g$-th power of $E$, where $E$ is an elliptic curve with complex multiplication.
\item $A$ is isomorphic to $E_1 \times \ldots \times E_s$, where all $E_s$ are mutually isogenous elliptic curves with complex multiplication.\label{prod}
\end{enumerate}
\end{lemma}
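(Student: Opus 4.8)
The plan is to prove the chain of equivalences for a complex torus $A$ of dimension $g \ge 2$ by establishing a cycle of implications, using the Hodge structure on $H^2(A)$ and the standard dictionary between endomorphisms of $A$ and morphisms of rational Hodge structures $H^1(A)$. First I would recall that $H^1(A)$ has a weight-one rational Hodge structure, that $H^k(A) = \Lambda^k H^1(A)$, and that $\operatorname{End}(A) \o \Q$ is canonically identified with the algebra of endomorphisms of the rational Hodge structure $H^1(A)$, i.e. with the $\Q$-subalgebra of $\operatorname{End}_{\Q}(H^1(A,\Q))$ commuting with the Hodge decomposition. From this dictionary, the Rosati-type count shows $\dim_{\Q}\operatorname{End}(A)\o\Q = 2g^2$ exactly when $H^1(A,\Q)$ is a direct sum of copies of a single CM type over an imaginary quadratic field $K$, which is the classical definition of CM for tori; this gives $(4) \Leftrightarrow (5)$, and $(5) \Leftrightarrow (6)$ is Poincaré reducibility plus the fact that a torus isogenous to $E^g$ with $E$ CM is in fact \emph{isomorphic} to a product of mutually isogenous CM elliptic curves (the endomorphism ring of $E^g$ is $M_g(O)$, which acts transitively enough on sublattices).

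Next I would handle the Hodge-theoretic equivalences $(1) \Leftrightarrow (2) \Leftrightarrow (3)$. The Picard number $\rho(A)$ is by definition $\dim_{\Q}\bigl(H^{1,1}(A) \cap H^2(A,\Q)\bigr)$, so $(1)$ says exactly that $H^{1,1}(A)$ is spanned by rational classes, i.e. $(1) \Leftrightarrow (2)$. For $(2) \Leftrightarrow (3)$: the rational structure on $H^2(A,\Q)$ is fixed, $H^2(A,\C) = H^{2,0} \oplus H^{1,1} \oplus H^{0,2}$, and complex conjugation acts on $H^2(A,\Q)\o\C$; the subspace $H^{2,0}\oplus H^{0,2}$ is the kernel of the projection to $H^{1,1}$ composed with the two projections, and since all three summands are conjugation-stable as a set (with $\overline{H^{2,0}} = H^{0,2}$ and $H^{1,1}$ self-conjugate), one of $H^{1,1}$, $H^{2,0}\oplus H^{0,2}$ is defined over $\Q$ if and only if the other is — concretely, $H^{1,1}_{\Q}$ and $(H^{2,0}\oplus H^{0,2})_{\Q}$ are orthogonal-complementary rational subspaces of $H^2(A,\Q)$ with respect to any rational bilinear form of the right signature, and $\dim H^{2,0}\oplus H^{0,2} = 2\binom{g}{2} = g(g-1)$ while $\dim H^{1,1} = g^2$, so maximality of one forces the other.

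The remaining and, I expect, the most substantive step is linking the Hodge-theoretic conditions $(1)$–$(3)$ to the endomorphism-algebra condition $(4)$. The implication $(4)\Rightarrow(1)$ is easy: if $A \sim E^g$ with $E$ CM then $H^{1,1}(A)$ is a sum of pieces $H^1(E)\o H^1(E)$ each of which is a CM Hodge structure of rank $4$ whose $(1,1)$-part is rational. For the converse $(2)\Rightarrow(4)$ I would argue as follows: if $H^{1,1}(A)$ is rational then the Mumford–Tate group of $H^2(A)$, hence of $H^1(A)$, is small — precisely, the condition that \emph{all} Hodge classes in $H^2(A) = \Lambda^2 H^1(A)$ are rational forces the image of the Hodge cocharacter to be contained in a maximal torus of $\mathrm{GL}(H^1(A,\Q))$ defined over $\Q$, so $H^1(A,\Q)$ decomposes over $\Q$ into rank-$2$ sub-Hodge-structures each with nontrivial (necessarily imaginary quadratic) endomorphisms, i.e. each $E_i$ is CM, and the rationality of the $(1,1)$-classes coming from $H^1(E_i)\o H^1(E_j)$ for $i\ne j$ forces $E_i$ and $E_j$ to share CM field and type up to conjugation, hence to be isogenous. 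This last point — extracting the common CM field from rationality of the mixed $(1,1)$-classes — is the technical heart; I would phrase it either via the explicit period matrix of $A$ (so that rationality of $H^{2,0}\oplus H^{0,2}$ becomes a rank condition on a matrix with entries in $\Q(\text{periods})$, forcing quadratic relations among the periods) or via Mumford–Tate groups, citing the standard structure theory of complex multiplication on abelian varieties as in \cite{Mum}. Once $(4)$ is established from $(2)$, the cycle closes using the earlier equivalences.
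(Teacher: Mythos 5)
The paper gives no argument for this lemma at all --- it is dispatched with ``See e.g.\ \cite{Beau}'' --- so there is no internal proof to compare yours against; what you propose is, in outline, exactly the standard proof found in Beauville's article: $(1)\Leftrightarrow(2)$ is the definition of $\rho$, $(4)\Leftrightarrow(5)$ comes from the bound $\dim_{\Q}\operatorname{End}(A)\o\Q\le 2g^2$ given by the commutant of the complex structure, $(5)\Leftrightarrow(6)$ is the module theory of orders in imaginary quadratic fields, and the substantive content is getting from the Hodge-theoretic conditions to $(4)$. So the plan is the right one; but two steps in your sketch have genuine gaps.

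First, the lemma is stated for complex tori, whereas the CM structure theory you invoke (Mumford--Tate groups, \cite{Mum}) is a theory of abelian varieties. You must first show that $(1)$ forces $A$ to be projective: $\operatorname{NS}(A)\o\R$ sits inside the $g^2$-dimensional real space of Hermitian forms $H$ on $\C^g$ with $\operatorname{Im}H$ integral on the lattice, so if it has full rank it meets the open cone of positive definite forms and $A$ carries a polarization. Second, your argument for $(2)\Rightarrow(4)$ --- ``the Hodge cocharacter lands in a maximal torus defined over $\Q$'' --- is the heart of the matter and as stated does not show where $g\ge 2$ enters; it must enter, since for $g=1$ one has $\rho=1=g^2$ with no complex multiplication. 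The clean route uses the polarization just constructed: the Rosati involution identifies $\operatorname{NS}(A)\o\Q$ with the symmetric part of $\operatorname{End}(A)\o\Q$, so $(1)$ says $\operatorname{End}(A)\o\R$ contains the full $g^2$-dimensional space of Hermitian matrices in $M_g(\C)$; for $g\ge 2$ Hermitian matrices generate $M_g(\C)$ as an $\R$-algebra (e.g.\ $(E_{12}+E_{21})\cdot i(E_{12}-E_{21})=i(E_{22}-E_{11})$ produces $i\cdot\mathrm{Id}$), whence $\operatorname{End}(A)\o\R=M_g(\C)$ and $(4)$ follows --- while for $g=1$ they generate only $\R$, which is exactly where the lemma would fail. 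Relatedly, in $(2)\Leftrightarrow(3)$ the phrase ``any rational bilinear form of the right signature'' should be made concrete: for $g=2$ the cup product $H^2\times H^2\to H^4\simeq\Q$ works, but for $g>2$ you need to pair against $\omega^{g-2}$ for a rational K\"ahler class $\omega$ (again only available once $A$ is known to be abelian), or else avoid $(2)\Leftrightarrow(3)$ altogether by deducing both from $(5)$ directly. Finally, note that $(5)\Rightarrow(6)$ (isogenous to $E^g$ implies actually isomorphic to a product) is a real theorem about torsion-free modules over quadratic orders, not a formality; your parenthetical about $M_g(O)$ is pointing at the right fact but should be backed by a citation.
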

\begin{proof} See e.g. \cite{Beau}
\end{proof}

Hence if $T = \C^2/\Delta$ is  the base of Iwasawa bundle $\pi$, then it is isomorphic to the product of two elliptic curves $E'$ and $E''$ with the same complex multiplication. Particularly, this means that the $\Q$-vector space $\Delta \o \Q \subset \C^2$ can be decomposed into a direct sum $\sigma(K) \oplus \sigma(K)$, where $K$ is an imaginary quadratic number field and $\sigma$  is one of its two conjugated embeddings into $\C$.

Let $E = \C/\Gamma$ be the structure group of the Iwasawa bundle over $T$. As it was discussed in the beginning of this section, there is a holomorphically symplectic form $q \colon \Lambda^2\C \to \C$ such that $$q(\Lambda^2\Delta \o \Q)  = \Gamma \o \Q.$$ Clearly, this implies that $\Gamma \subset \sigma(K)$ and $E$ carries the same complex multiplication as $E'$ and $E''$.

\begin{cor}
Let $\pi \colon I \to T$ be an Iwasawa bundle with the structure group $E$. Then $T$ is isogenous to a product of two elliptic curves $E' \times E''$. All the three curves $E, E'$ and $E''$  carry the same complex multiplication.
\end{cor}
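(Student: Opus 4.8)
The plan is to deduce the statement by combining the two lemmas of this section with the explicit form of the structure cocycle of the Iwasawa bundle. First I would invoke Lemma 6.3: the base $T$ of $\pi$ has maximal Picard number. Feeding this into the chain of equivalences of Lemma 6.4 (in dimension $g=2$) shows that $T$ is isomorphic --- hence in particular isogenous --- to a product $E' \times E''$ of two mutually isogenous elliptic curves, each carrying complex multiplication. By Proposition 6.1, fix an imaginary quadratic field $K$ with an embedding $\sigma \colon K \hookrightarrow \C$ so that $E'$ and $E''$ both have complex multiplication by $K$. After rescaling the two coordinates on the universal cover $\C^2 \to T$, we may arrange that the period lattice $\Delta$ of $T$ satisfies
$$\Delta \otimes \Q \;=\; \sigma(K) \oplus \sigma(K) \;\subset\; \C \oplus \C = \C^2 ,$$
using that the subset $\sigma(K) \subset \C$ does not depend on which of the two embeddings of $K$ is chosen.

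Next I would recall, from the discussion at the beginning of this section, that the central extension $0 \to \mathfrak z \to \g \to \g^{ab} \to 0$ defining $G$ is classified by a non-degenerate $\C$-bilinear alternating form $q \colon \Lambda^2 \C^2 \to \C$, and that with respect to the rational structures $\Delta \otimes \Q$ and $\Gamma \otimes \Q$ --- where $E = \C/\Gamma$ is the fibre of $\pi$ --- one has $q(\Lambda^2(\Delta \otimes \Q)) = \Gamma \otimes \Q$. Since $\Lambda^2\C^2$ is one-dimensional over $\C$, the map $q$ is a nonzero scalar multiple of the determinant: $q(v \wedge w) = c\,(v_1 w_2 - v_2 w_1)$ for some $c \in \C^{\times}$ and all $v = (v_1,v_2)$, $w = (w_1,w_2)$ in $\C^2$.

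The key step is then a one-line linear-algebra computation: if $v_1, v_2, w_1, w_2$ all lie in the subfield $\sigma(K) \subset \C$, then $v_1 w_2 - v_2 w_1 \in \sigma(K)$, because $\sigma(K)$ is closed under multiplication and subtraction. Hence $\Gamma \otimes \Q = q(\Lambda^2(\Delta \otimes \Q)) \subseteq c\,\sigma(K)$, and since both sides are two-dimensional $\Q$-vector spaces, they must coincide. Therefore $\Gamma$ is commensurable with $c\,\sigma(O)$ for an order $O \subset K$, so $E$ is isogenous to $\C/\sigma(O)$, which by Proposition 6.1 carries complex multiplication by $K$ --- the same field as $E'$ and $E''$. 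Finally, any two elliptic curves with complex multiplication by the same imaginary quadratic field are isogenous, so $E$, $E'$ and $E''$ are mutually isogenous, which is what is meant by their carrying the same complex multiplication.

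I expect that the only point requiring care --- once Lemmas 6.3 and 6.4 are taken as given --- is the bookkeeping with rational structures: one must be sure that $q$ is genuinely defined over $\Q$ with respect to $\Delta \otimes \Q$ and surjects onto the whole of $\Gamma \otimes \Q$. This is precisely the assertion, used already in the proof of Lemma 6.3, that the image of the Lie-algebra cocycle generates the full centre $\mathfrak z$. Granting that, the determinant computation and the dimension count finish the proof with no further geometric input; in particular, no compatibility is needed between the coordinates in which $T$ splits as a product and those coming from $\g^{ab}$, since $q$ is proportional to the determinant in every $\C$-linear coordinate system.
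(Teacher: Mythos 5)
Your argument is correct and follows the paper's own route: Lemma 6.3 combined with Lemma 6.4 gives the splitting of $T$ into a product of CM elliptic curves, and the observation that the classifying cocycle $q$ is a nonzero scalar multiple of the determinant forces $\Gamma \otimes \Q$ to coincide with a scalar multiple of $\sigma(K)$. You merely spell out the determinant computation, the coordinate rescaling, and the dimension count that the paper compresses into ``clearly, this implies,'' which is a reasonable amount of extra care rather than a deviation.
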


\begin{predl}
Let $A$ be a complex torus of maximal Picard number and $B \subset A$ be an elliptic curve. Then $B$ has complex multiplication.
\end{predl}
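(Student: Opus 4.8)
The plan is to reduce the statement to the classical characterization of complex multiplication for elliptic curves given in Proposition 6.3, namely that $B$ has CM as soon as it admits an automorphism fixing the origin other than $\pm\mathrm{Id}$, or equivalently that $\operatorname{End}(B)\o\Q$ is strictly larger than $\Q$. The source of the extra endomorphism will be the rational Hodge structure on $A$: since $A$ has maximal Picard number, by Lemma 6.6 the algebra $\operatorname{End}(A)\o\Q$ is as large as possible, and in particular $A$ is isogenous to $E_0^g$ for a single CM elliptic curve $E_0$, with $\operatorname{End}(A)\o\Q \cong M_g(K)$ where $K = \operatorname{End}(E_0)\o\Q$ is an imaginary quadratic field.

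The key steps, in order: First, fix an isogeny $A \sim E_0^g$ and transport $B$ to an elliptic subcurve $B' \subset E_0^g$; an isogeny induces isogenies between subtori, so it suffices to prove $B'$ has CM, and CM is an isogeny invariant for elliptic curves. Second, view $B' \hookrightarrow E_0^g$ at the level of rational Hodge structures: $H_1(B',\Q)$ is a rank-two rational sub-Hodge-structure of $H_1(E_0^g,\Q) \cong H_1(E_0,\Q)^{\oplus g}$. Since $K$ acts $\Q$-linearly on $H_1(E_0,\Q)$ compatibly with the Hodge structure, it acts diagonally on $H_1(E_0,\Q)^{\oplus g}$, again respecting the Hodge structure. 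Third — the crucial point — show that this $K$-action preserves the subspace $H_1(B',\Q)$. This follows because $H_1(B',\Q)$ is precisely the $\Q$-span of the image of $H_1(B',\R)\cap F^0$ type data, but more concretely: an elliptic subcurve of $E_0^g$ corresponds to a $K$-line in $K^g$ (identifying $H_1(E_0,\Q)$ with $K$ as a one-dimensional $K$-vector space via the CM structure — here one uses that $K$ is a field so the Hodge decomposition of $H_1(E_0,\C)$ is exactly the pair of eigenspaces of $K\o\C$); equivalently every rational sub-Hodge-structure of rank $2$ is automatically a $K$-submodule, since the $K$-action and the Hodge structure have the same centralizer. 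Concretely one checks: the Mumford–Tate group of $E_0$ is the norm-one torus (or $K^\times$ itself), so any morphism of Hodge structures, in particular the inclusion $H_1(B',\Q)\hookrightarrow H_1(E_0,\Q)^{\oplus g}$, is $K$-equivariant for the induced action. Finally, $K$ then acts on $H_1(B',\Q)$, a two-dimensional $\Q$-vector space, commuting with its weight-one Hodge structure; this exhibits $\operatorname{End}(B')\o\Q \supseteq K \supsetneq \Q$, so by Proposition 6.3, $B'$ and hence $B$ carries complex multiplication.

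The main obstacle is the third step: making precise that the $K$-action on $H_1(E_0,\Q)^{\oplus g}$ stabilizes the rank-two subspace $H_1(B',\Q)$. The clean way is the Mumford–Tate argument just sketched — any sub-Hodge-structure is stable under the commutant of the Mumford–Tate group, and for a CM elliptic curve that commutant contains $K$ acting by scalars on each factor — but this requires citing the fact that morphisms of Hodge structures between CM abelian varieties are automatically linear over the relevant field, which is standard (e.g. in Mumford \cite{Mum}) though slightly technical. An alternative, more hands-on route avoids Mumford–Tate entirely: realize $E_0 = \C/\sigma(\O)$ for an order $\O$ in $K$ as in Proposition 6.3(3), so $E_0^g = \C^g/\sigma(\O)^g$ and multiplication by any $\lambda\in\O$ acts diagonally and holomorphically; an elliptic subcurve $B'$ is the image of a holomorphic line $\ell\subset\C^g$ whose lattice $\ell\cap\sigma(\O)^g$ has rank two, and one shows directly that $\lambda\cdot\ell$ is again such a line — indeed $\lambda$ acts $\C$-linearly so $\lambda\cdot\ell$ is a line, and it still meets $\sigma(\O)^g$ in a rank-two lattice since $\lambda\sigma(\O)^g\subseteq\sigma(\O)^g$; then either $\lambda\cdot\ell=\ell$ for all $\lambda$, giving the desired endomorphism of $B'$, or some $\lambda$ moves $\ell$, and by a dimension count in $\C^g$ the $\O\o\Q$-span of $\ell$ is all of $\C^g$ forcing $g$ to be even and $B'$ to sit in a product of two copies of $E_0$ on which $K$ still acts — in every case one extracts a nontrivial endomorphism of $B'$. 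Either way the conclusion is that $\operatorname{End}(B)\o\Q\neq\Q$, which is what we wanted.
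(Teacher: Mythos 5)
Your argument is essentially correct, but it takes a noticeably longer route than the paper. The paper's proof is a one-liner built on item (6) of the maximal-Picard-number lemma: write $A \cong E_1 \times \cdots \times E_s$ as a product of mutually isogenous CM elliptic curves, observe that at least one coordinate projection $pr_j|_B \colon B \to E_j$ is non-constant (they cannot all be constant, since their common kernel is finite), hence is an isogeny of elliptic curves, and conclude that $B$ inherits the complex multiplication of $E_j$ because the rational endomorphism algebra is an isogeny invariant. You instead pass to $H_1$ and show that the rank-two rational sub-Hodge structure $H_1(B',\Q) \subset H_1(E_0^g,\Q) \cong K^g$ is automatically $K$-stable; your cleanest justification --- that $H_1(B',\C)$ splits as a sum of its intersections with the two eigenspaces of $K\otimes\C$, each of which is pointwise scaled by $K$ --- is correct and gives $K \hookrightarrow \operatorname{End}(B')\otimes\Q$. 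What your approach buys is a statement that is visibly about Hodge structures and would generalize to sub-Hodge structures that do not come from subtori; what it costs is the extra machinery (Mumford--Tate groups or eigenspace decompositions) where a single projection suffices. One caveat: your ``alternative, more hands-on route'' at the end is not sound as written --- if some $\lambda$ moves the line $\ell$, the $K\otimes\Q$-span of $\ell$ is only forced to be two-dimensional, not all of $\C^g$, and the claim that $g$ must be even does not follow; you should either delete that paragraph or rely solely on the eigenspace argument (or, better, on the projection argument the paper uses).
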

\begin{proof}
For the first step notice that $B$ carries  complex multiplication if and only if it is isogenous to a certain curve $B'$ with  complex multiplication.
Due to the property \ref{prod} from  Lemma 6.4  the torus $A$ is isomorphic to $E_1 \times \ldots \times E_s$, where $E_j$ are  isogenous elliptic curves with the same complex multiplication.

Let $pr_j \colon A \to E_j$ be the natural projections. At least one of this projections  defines an isogeny $pr_j \colon B \to E_j$. Hence $B$ carries the same complex multiplication as $E_j$.
\end{proof}

\begin{predl}
Let $A \subset I$ be a holomorphic torus. Then $A$ carries a  complex multiplication. 
\end{predl}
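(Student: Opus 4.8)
The plan is to reduce the statement to the structure results already established in this appendix, using the classification of subtori of an Iwasawa manifold obtained in Section~\ref{curves} and Section~\ref{surfaces}. First I would observe that a holomorphic torus $A \subset I$ is in particular a complex submanifold, so it is either a curve or a surface (since $\dim I = 3$ and $A \neq I$, because $I$ is not a torus). If $\dim A = 1$, then by Corollary~4.9 (curves in $I$ lie in a holomorphic subtorus) $A$ is either a fibre $E$ of the Iwasawa bundle or projects isomorphically onto an elliptic curve $E' \subset T$; in either case Corollary~6.6 tells us that $E$ and every elliptic curve in $T$ carry the fixed complex multiplication coming from the imaginary quadratic field $K$ attached to $\pi$, so $A$ has complex multiplication.

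If $\dim A = 2$, then $A$ is one of the two types listed in Theorem~5.1. In the isotrivial elliptic surface case $A$ would be of Kodaira dimension~$1$, which is impossible for a torus, so $A$ must be an abelian surface; Theorem~5.1(1) then gives that $A$ is isomorphic to a product of two elliptic curves. Moreover $\pi(A) \subset T$ is an elliptic curve $B$ (the image of a surface cannot be all of $T$), and $A = \pi^{-1}(B) \cong B \times E$ by Proposition~4.7 and the proof of Theorem~5.1. By Corollary~6.6 both $B$ and $E$ carry the complex multiplication of $K$, hence so does their product $A$. Alternatively one may invoke Proposition~6.7: $T$ has maximal Picard number by Lemma~6.3, and any abelian surface inside $T$'s total space $I$ has its two elliptic factors isogenous to elliptic curves inside a torus of maximal Picard number, so by Proposition~6.7 they have complex multiplication.

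I would then combine these cases into a uniform statement: in all situations $A$ is isogenous to a product of copies of a single elliptic curve $E_K$ with complex multiplication by an order in $K$, and by Lemma~6.4 (the equivalence of maximal Picard number with carrying complex multiplication) such $A$ is of CM-type. This is essentially the whole argument; there is no genuine computation left once the classification theorems are in hand.

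The main obstacle, if any, is purely bookkeeping: one must be careful that a ``holomorphic torus $A \subset I$'' is interpreted as a complex-analytic submanifold biholomorphic to a compact complex torus (and in particular a subgroup after translation), so that Theorem~5.1 and Corollary~4.9 genuinely apply, and one must check that the complex-multiplication field attached to a subtorus really is the $K$ of Corollary~6.6 rather than some unrelated field — this follows because every elliptic curve occurring (a fibre of $\pi$, or an elliptic curve in $T$) is pinned down by Corollary~6.6 to carry exactly that $K$. No deeper input is needed.
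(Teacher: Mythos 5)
Your proposal is correct and follows essentially the same route as the paper: a case split on $\dim A$, reduction to the fibre $E$ or to an elliptic curve in $T$ (respectively to $A\simeq B\times E$ in the surface case), and then the combination of the maximal Picard number of $T$ with the fact that an elliptic curve in a torus of maximal Picard number carries complex multiplication. The only cosmetic imprecision is that in the curve case $\pi|_A$ is in general an isogeny onto its image rather than an isomorphism, but since complex multiplication is an isogeny invariant this does not affect the argument.
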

\begin{proof}
Suppose that  $\dim_{\C}A = 1$, i.e. $A$ is an elliptic curve.  As follows from Lemma 4.7 either $A$ is a fibre of $\pi$ and hence $A \simeq E$, or $\pi(A) = B$ is an isogenous to $A$  curve in $T$.  Due to Proposition 6.6 and Lemma 6.3 $B$ is an elliptic curve with complex multiplication.

Now assume that $\dim_{\C}A = 2$. Then $\pi(A) = B$ is an elliptic curve. Moreover, as it is proved in Proposition 4.11, $A \simeq B \times E$. Thus, Proposition 6.6 and Lemma 6.4 imply that the Picard number of $A$ equals 4.
\end{proof}

\end{document}